\title{\LARGE \bf
An Optimal Controller Architecture for Poset-Causal Systems}
\author{Parikshit Shah and Pablo A. Parrilo  \\
MIT }
\def\T={\buildrel {\scriptscriptstyle\triangle} \over =}
\def\mc{\mathcal}
\def\mb{\mathbb}
\def\ba{\begin{array}}
\def\ea{\end{array}}
\def\ll{\left[}
\def\rr{\right]}
\def\l{\left\{}
\def\r{\right\} }
\def\up{{\uparrow}}
\def\up{{\uparrow}}
\def\down{{\downarrow}}
\def \inc {\mathcal{I}(\mathcal{P})}
\def \vec{\text{vec}}
\def\kron{\otimes}
\def\diag{\text{diag}}
\newtheorem{theorem}{Theorem}
\newtheorem{definition}{\indent Definition}
\newtheorem{lemma}{Lemma}
\newtheorem{example}{Example}
\newenvironment{remark}[1][Remark]{\begin{trivlist}
\item[\hskip \labelsep {\bfseries #1}]}{\end{trivlist}}
\newenvironment{remarks}[1][Remarks]{\begin{trivlist}
\item[\hskip \labelsep {\bfseries #1}]}{\end{trivlist}}
\begin{document}
\maketitle
\thispagestyle{plain}
\pagestyle{plain}

\bibliographystyle{plain}


\begin{abstract}
  We propose a novel and natural architecture for decentralized
  control that is applicable whenever the underlying system has the
  structure of a partially ordered set (poset). This controller
  architecture is based on the concept of M\"obius inversion for posets, and
  enjoys simple and appealing separation properties, since the
  closed-loop dynamics can be analyzed in terms of decoupled
  subsystems. The controller structure provides rich and interesting
  connections between concepts from order theory such as M\"{o}bius
  inversion and control-theoretic concepts such as state prediction,
  correction, and separability. In addition, using our earlier results
  on $\mathcal{H}_2$-optimal decentralized control for arbitrary
  posets, we prove that the $\mc{H}_2$-optimal controller in fact
  possesses the proposed structure, thereby establishing the
  optimality of the new controller architecture.
\end{abstract}

\section{Introduction}\label{sec:1}
The prevalence of large-scale complex systems in many areas of engineering has emphasized the need for a systematic study of decentralized control. While the problem of decentralized control in full generality remains a challenging task, certain classes of problems have been shown to be more tractable than others \cite{poset,quadinv,spatinv,poset2}.

Motivated by the intuition that \emph{acyclic} structures within the
context of decentralized control should be tractable, the authors
began a systematic study of a class of systems known as poset-causal
systems in \cite{poset}. In follow-up work \cite{pari_thesis,poset_h2}
we showed that the problem of computing $\mc{H}_2$-optimal controllers
using state-space techniques over this class of systems was tractable,
with efficient solutions in terms of uncoupled Riccati equations. We
also provided several intuitive explanations of the controller
structure, though a detailed analysis of the same was not presented.

In this paper we are concerned with the following questions:
``What is a sensible architecture of controllers for poset-causal
systems? What should be the role of controller states, and what
computations should be involved in the controller?'' This paper
focuses on answering this \emph{architectural} question. Our main
contributions in this paper are:
\begin{itemize}
 \item We propose a controller architecture that involves natural concepts from order theory  and control theory as building blocks. 
\item We show that a natural coordinate transformation of the state variables yields a novel \emph{separation principle}. 
\item We show that the optimal $\mc{H}_2$ controller (with state-feedback) studied in \cite{poset_h2} has precisely the proposed controller structure.
\item We establish novel connections that tie together three well-known concepts: (a) Youla parameterization in control, (b) the concept of purified output feedback in robust optimization and (c) M\"{o}bius inversion on posets.
\end{itemize}

\begin{figure}[htbp]
  \begin{center}
  \includegraphics[scale=0.4]{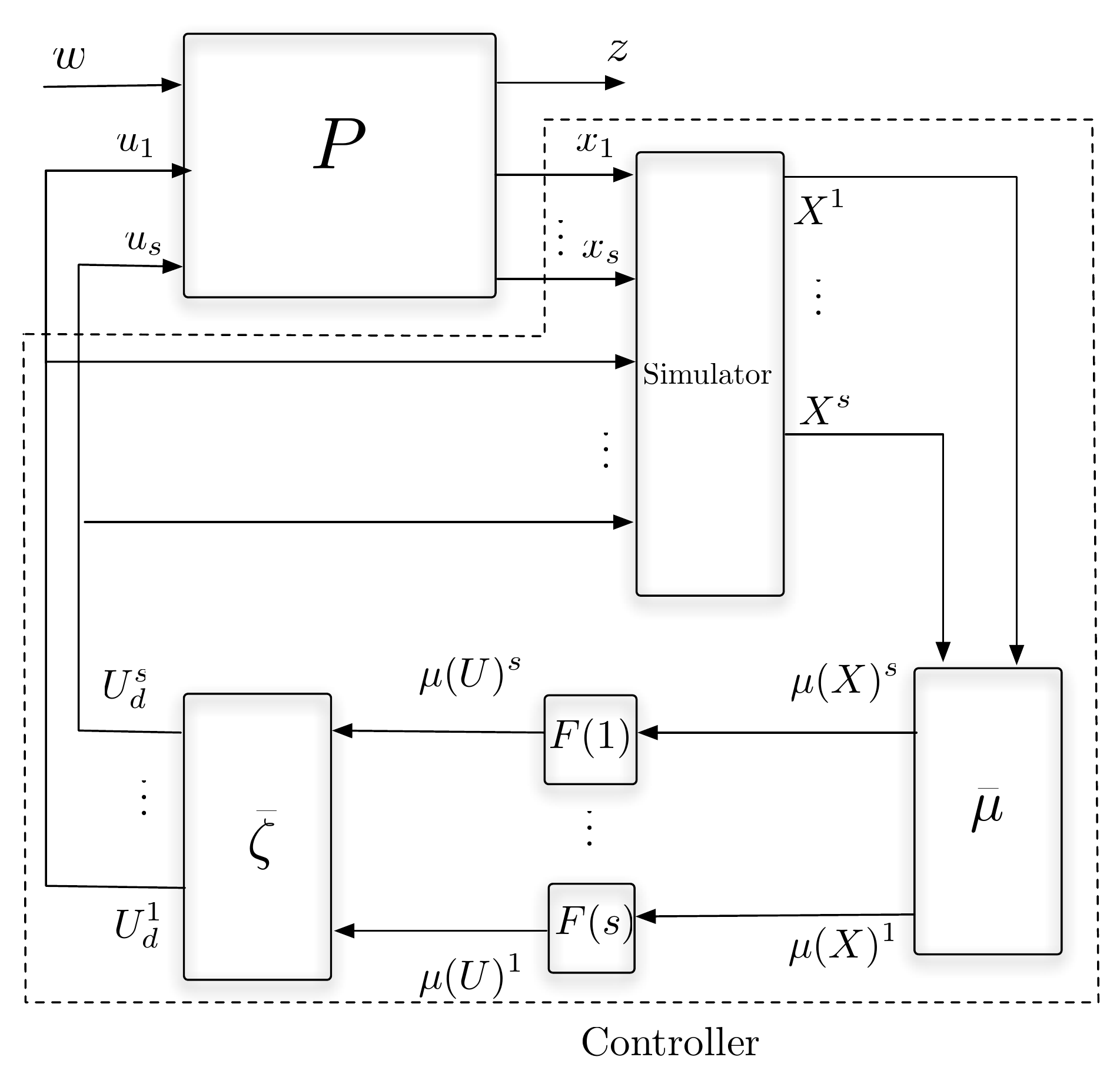}
  \end{center}
  \caption{A block-diagram representation of the control architecture. The simulator predicts the unknown states at each subsystem using available information (the prediction at subsystem $k$ is denoted by $X^k$). The controller then computes the differential improvement in the prediction using $\mu$, acts on it with a local gain $F(k)$ and then ``integrates'' the signal along the poset using $\zeta$ to produce the input signal.}
  \label{fig:tf1}
  \end{figure}

The controller structure that we propose in this paper is as follows.
At each subsystem of the overall system, the partial ordering of the
information structure allows one to decompose the global state into
``upstream'' states (i.e. states that are available), ``downstream''
(these are unavailable) and ``off-stream'' states (corresponding to
uncomparable elements of the poset). The downstream and off-stream
states are (partially) predicted using available upstream information using a ``simulator'' (see Fig. \ref{fig:tf1}),
this prediction is the role of the controller states. The best
available information of the global state at each subsystem is then
described using a matrix $X$; each column of $X$ corresponds to the
best local guess or estimate of the overall state at a particular
subsystem.

Having computed these local partial estimates, the controller then
performs certain natural local operations on $X$ that preserve the
structure of the poset. These local operations are the well-known
$\zeta$ and $\mu$ operations in M\"obius inversion. These operations,
which are intimately related to the inclusion-exclusion formula and
its generalizations, have a rich and interesting theory, and appear in
a variety of mathematical contexts \cite{rota}.  The control inputs
are of the form $U=\zeta(\mathbf{F} \circ \mu(X))$.  As we will see
later, the operators $\mu$ and $\zeta$ can be interpreted as
generalized notions of differentiation and integration on the poset so
that $\mu(X)$ may be interpreted as the differential improvement in
the prediction of the local state. Here $\mathbf{F}=\left\{F(1), \ldots, F(s) \right\}$ are feedback gain matrices corresponding to the different subsystems. The quantity $\mathbf{F} \circ
\mu(X)$ may therefore be interpreted as a local ``differential
contribution'' to the overall control signal. The overall control law
then aggregates all these local contributions by ``integration'' along
the poset using $\zeta$. This architecture has been shown diagrammatically in Fig. \ref{fig:tf1}.

Computational and architectural issues in decentralized control have
been important areas of study; we mention some related works below.
From a computational standpoint, the problem of computing
$\mc{H}_2$-optimal controllers for quadratically invariant systems was
studied in \cite{quadinv}, however that approach does not provide much
insight into the structure of the optimal controller.  In the context
of decentralized control, the computational and architectural issues
for the ``Two-Player Case'' were studied in \cite{Swigart2}. This work
was extended to arbitrary posets in \cite{poset_h2} (similar results
were obtained in \cite{swigart_thesis}), and some hints regarding the
structure of the optimal controller were provided in our previous
work. Another important related work is the simpler but related
\emph{team-theory} problem over posets studied in \cite{Ho} which
provides us with an interesting starting point in this paper. We mention the work of Witsenhausen \cite{Witsenhausen2,Witsenhausen1}
who provided important insight regarding different types of
information constraints in control problems. Finally, team theory and decentralized control have also been studied in \cite{gattami_thesis}.

The rest of this paper is organized as follows: In Section~\ref{sec:2}
we introduce the necessary order-theoretic and control-theoretic
preliminaries. In Section~\ref{sec:ingredients} we present the basic
building blocks involved in the controller architecture.  In
Section~\ref{sec:general_poset} we describe in detail the proposed
architecture, establish the separability principle and explain its
optimality property with respect to the $\mc{H}_2$ norm. In Section~\ref{sec:5}, we discuss a block diagram perspective to interpret our results. In Section~\ref{sec:6}, we discuss connections to the Youla parameterization and the literature on purified output feedback.


\section{Preliminaries}\label{sec:2}
In this section we introduce some concepts from order theory. Most of
these concepts are well-studied and fairly standard, we refer the
reader to \cite{aigner}, \cite{davey} for details.
\subsection{Posets}
\begin{definition}
A partially ordered set (or \emph{poset}) $\mc{P}=(P, \preceq)$ consists of a set $P$
along with a binary relation $\preceq$ which is reflexive, anti-symmetric and transitive \cite{aigner}.
\end{definition}
We will sometimes use the notation $a \prec b$ to denote the strict order relation $a \preceq b$ but $a \neq b$. 

An important related concept is that of a \emph{product} of two posets.
\begin{definition} \label{def:poset_product}
Let $\mc{P}=(P, \preceq_{\mc{P}})$ and $\mc{Q}=(Q,\preceq_{\mc{Q}})$ be two posets. We define their \emph{product} poset $\mc{P}\times \mc{Q}=(P \times Q,\preceq_{\mc{P}\times \mc{Q}}) $ to be the set $P \times Q$ equipped with the order relation $\preceq_{\mc{P}\times \mc{Q}}$ satisfying 
$$
(p_1,q_1) \preceq_{\mc{P}\times \mc{Q}} (p_2,q_2) \text{ if } p_1 \preceq_{\mc{P}} p_2 \text{ and } q_1 \preceq_{\mc{Q}} q_2.
$$
\end{definition}
It may be easily verified that $\preceq_{\mc{P}\times \mc{Q}}$ as defined above constitutes a partial order relation.

Most of this paper we will deal with finite posets (i.e. $|P|$ is finite).
It is possible to represent a poset graphically via a \emph{Hasse diagram} by representing the transitive reduction of the poset as a graph \cite{aigner}.
\begin{example} \label{example:1}
An example of a poset with three elements (i.e., $P=\left\{ 1,
2, 3 \right\}$) with order relations $1 \preceq 2$ and $1 \preceq 3 $
is shown in Figure \ref{fig:2.1}(b).
\begin{figure}[htbp]
  \begin{center}
  \includegraphics[scale=0.4]{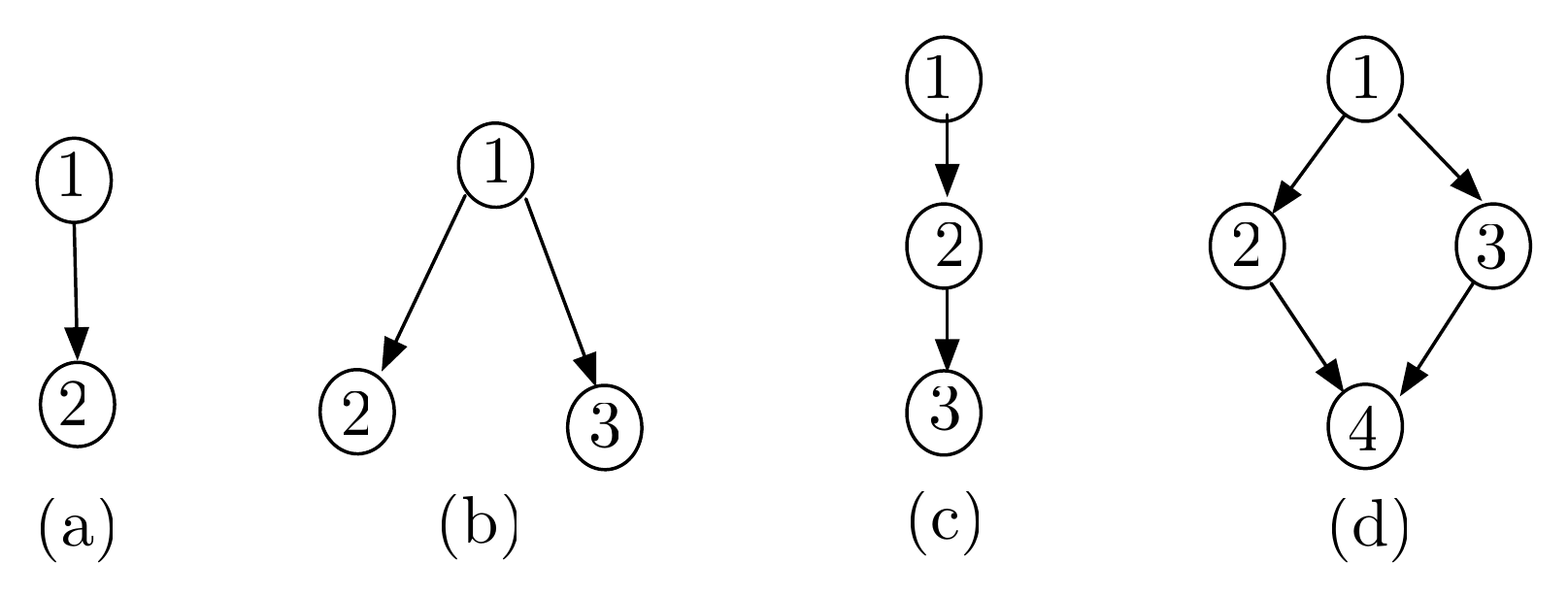}
  \end{center}
  \caption{Hasse diagrams of some posets.}
  \label{fig:2.1}
\end{figure}
\end{example}
Let $\mc{P}=(P, \preceq)$ be a poset and let $p\in P$. We define $\downarrow p=\l q\in P \; |\; p \preceq q \r$ (we call this the \emph{downstream set}).
Let $\down \down p = \l q\in P \; | \; p \preceq q, q \neq p \r$. 
Similarly, let  $\uparrow p=\l q\in P \; | \; q \preceq p \r$ (called a \emph{upstream set}), and $\up \up p= \l q\in P \; | \; q \preceq p, q \neq p \r$. We define $ \down \up p = \l q\in P \; | \; q \npreceq p, q \npreceq p \r$ (called the \emph{off-stream set}); this is the set of \emph {uncomparable} elements that have no order relation with respect to $p$. 
Define an \emph{interval} $[i,j] =\l p \in P \; | \; i \preceq p \preceq j \r$.
 A \emph{minimal element} of the poset is an element $p \in P$ such that if $q \preceq p$ for some $q \in P$ then $q=p$. (A maximal element is defined analogously).
 
 In the poset shown in Figure \ref{fig:2.1}(d), $\down 1 = \l 1, 2, 3, 4 \r$, whereas $\down \down 1 = \l  2, 3, 4 \r$. Similarly $\up \up 1 = \emptyset$, $\up 4 = \l 1, 2, 3, 4 \r$, and $\up \up 4 = \l 1, 2, 3 \r$. The set $\down \up 2=\l 3 \r$. 
\begin{definition} \label{def:inc_algebra}
Let $\mc{P}=(P,\preceq)$ be a poset. Let $\mathbb{Q}$ be a ring. The set of all functions
$f:P \times P \rightarrow \mathbb{Q}$
with the property that $f(x,y)=0$ if $y \npreceq x$ is called the
\emph{incidence algebra} of $\mc{P}$ over $\mathbb{Q}$. It is denoted by
$I(\mc{P})$. 
\footnote[1]{Standard definitions of the incidence algebra use the opposite convention, namely $f(x,y)=0$ if $x \npreceq y$ so their matrix representation typically has upper triangular structure. We reverse the convention so that they are lower-triangular, and thus in a control-theoretic setting one may interpret them as representing \emph{poset-causal} maps. This reversal of convention entails transposing other standard objects like the zeta and the M\"{o}bius operators. For the same reason, we also reverse the convention of drawing Hasse diagrams so that minimal elements appear at the top of the poset.}
\end{definition}
When the poset $\mc{P}$ is finite, the elements in the incidence
algebra may be thought of as matrices with a specific sparsity
pattern given by the order relations of the poset in the following way. 
An example of an element of $\mc{I}(\mc{P})$ for the poset from
Example 1 (Fig. \ref{fig:2.1}(b)) is:
\begin{equation*}
\zeta_{\mc{P}}=\left[ \begin{array}{ccc} 1 & 0 & 0 \\
1 & 1 & 0 \\
1 & 0 & 1 \end{array} \right].
\end{equation*}
Given two functions $f, g \in I(\mc{P})$, their sum $f+g$ and scalar
multiplication $c f$ are defined as usual. The product $h=f \cdot g$
is defined by $h(x,y)=\sum_{z \in P}f(x,z)g(z,y)$.  Note that the
above definition of function multiplication is made so that it is
consistent with standard matrix multiplication. It is well-known that
the incidence algebra is an associative algebra \cite{aigner,poset}.
\subsection{Control Theoretic Preliminaries}
\subsubsection{Poset-causal systems}
We consider the following state-space system in continuous time:

\begin{equation} \label{eq:2}
\begin{split}
\dot{x}(t)&=Ax(t)+w(t)+Bu(t) \\
z(t)&=Cx(t)+Du(t) \\
y(t)&=x(t).
\end{split}
\end{equation}
In this paper we present the continuous time case only, however, we wish
to emphasize that analogous results hold in discrete time in a
straightforward manner.  In this paper we consider what we will call
\emph{poset-causal systems}. We think of the system matrices
$(A,B,C,D)$ to be partitioned into blocks in the following natural
way. Let $\mc{P}=(P,\preceq)$ be a poset with $P=\left\{1,\ldots, s
\right\}$.  We think of this system as being divided into $s$
subsystems, with subsystem $i$ having some states $x_i(t) \in
\mb{R}^{n_i}$, and we let $N=\sum_{i \in P} n_i$ be the total degree
of the system. The control inputs at the subsystems are $u_i(t) \in
\mb{R}^{m_i}$ for $i\in \left\{1,\ldots, s \right\}$.  The external
output is $z(t) \in \mb{R}^{p}$. The signal $w(t)$ is a disturbance
signal. The states and inputs are partitioned in the natural way such
that the subsystems correspond to elements of the poset $\mc{P}$ with
$x(t)=\left[ x_1(t)\left| x_2(t) \left| \ldots \left| x_s(t)
      \right. \right. \right. \right] ^{T}$, and $u(t)=\left[
  u_1(t)\left| u_2(t) \left| \ldots \left| u_s(t)
      \right. \right. \right. \right] ^{T}$. This naturally partitions
the matrices $A, B, C, D$ into appropriate blocks so that
$A=\left[A_{ij} \right]_{i,j \in P}$, $B=\left[B_{ij} \right]_{i,j \in
  P}$, $C=\left[C_{j} \right]_{j \in P}$ (partitioned into columns),
$D=\left[D_{j} \right]_{j \in P}$. (We will throughout deal with
matrices at this block-matrix level, so that $A_{ij}$ will
unambiguously mean the $(i,j)$ block of the matrix $A$.) Using these
block partitions, one can define the incidence algebra at the block
matrix level in the natural way.  The block sizes will be obvious from
the context and we denote by $\mc{I}(\mc{P})$ the block incidence
algebra.
\begin{remark}
  In this paper, for notational simplicity we will assume $n_i=1$, and
  $m_i=1$. We emphasize that this is only done to simplify the
  presentation; the results hold for arbitrary block sizes $n_i$ and
  $m_i$ by interpreting the formulas ``block-wise'' in the obvious way.
\end{remark} 

We call such systems \emph{poset-causal} due to the following causality-like property among the subsystems. If an input is applied to subsystem $i$ via $u_i$ at some time $t$, the effect of the input is seen by the downstream states $x_j$ for all subsystems $j \in \downarrow i$ (at or after time $t$). Thus $ \downarrow i$ may be seen as the cone of influence of input $i$. We refer to this causality-like property as \emph{poset-causality}. This notion of causality enforces (in addition to causality with respect to time), a causality relation between the subsystems with respect to a poset.

\subsubsection{Information Constraints on Controller}
 In this paper, we will be interested in the design of poset-causal controllers of the form:
\begin{equation}\label{eq:controller_form}
K=\ll \ba{c|c}
A_K & B_K \\ \hline
C_K & D_K
\ea
\rr.
\end{equation}
We will require that the controller also be poset-causal, i.e. that $K \in \mc{I}(\mc{P})$. In later sections we will present a general architecture for controllers with this structure with some elegant properties. 


A control law \eqref{eq:controller_form} with $K \in \inc$ is said to be \emph{poset-causal}
since $u_i$ depends only on $x_j$ for $j \in \uparrow i$ (i.e. upstream information) thereby enforcing poset-causality constraints also on the controller.
\vspace{-15pt}
\subsection{Notation}
Since we are dealing with poset-causal systems (with respect to the poset $\mc{P}=(P, \preceq))$, most vectors and matrices will be naturally indexed with respect to the set $P$ (at the block level). Recall that every poset $\mc{P}$ has a linear extension (i.e. a total order on $P$ which is consistent with the partial order $\preceq$). For convenience, we fix such a linear extension of $\mc{P}$, and all indexing of our matrices throughout the paper will be consistent with this linear extension (so that elements of the incidence algebra are lower triangular).

Given a matrix $M$, $M_{ij}$ will as usual denote the $(i,j)^{th}$ entry. 
The $i^{th}$ column will be denoted by $M^i$. If M is a block $|P| \times |P|$ matrix, we will denote $M(\down i, \down i)$ to be the sub-matrix of $M$ whose rows and columns are in $\down i$. We will also need to deal with the inverse operation: we will be given an $|S| \times |S|$ matrix $K$ (indexed by some subset $S \subseteq P$) and we will wish to embed it into a $|P| \times |P|$ matrix by zero-padding the locations corresponding to row and column locations in $P \setminus S$. We will denote this embedded matrix by $\hat{K}$.

\section{Ingredients of the Architecture} \label{sec:ingredients}
The controller architecture that we propose is composed of three main ingredients:
\begin{itemize}
\item The notion of \emph{local variables},
\item A notion of a local product, denoted by ``$\circ$'',
\item A pair of operators $\zeta, \mu$ that operate on the local variables in a way that is consistent with the order-theoretic structure of the poset. These operators, called the \emph{zeta} operator and the \emph{M\"{o}bius} operator respectively, are classical objects and play a central role in much of order theory, number theory and combinatorics \cite{rota}.
\end{itemize}
\vspace{-15pt}
\subsection{Local Variables and Local Products}
We begin with the notion of global variables. 
\begin{definition}
A function $Z:P \times P \rightarrow \mathbb{R}$ is called a \emph{local variable}.
A function $z:P \rightarrow \mathbb{R}$ is called a \emph{global variable}. The local variable $Z$ is said to be \emph{consistent} with the global variable $z$ if $Z(i,i)=z(i)$ for all $i \in P$.
\end{definition}
\begin{remark}
When the set $P$ is finite it is convenient to think of local variables $Z$ as matrices in $\mathbb{R}^{s \times
    s}$ and global variables $z$ as vectors in $\mathbb{R}^{s}$. The local variable $Z$ is consistent with the global variable $z$ if 
 $Z_{ii}=z_i$.
\end{remark}
Typical global variables that we encounter will be the overall state $x$ and the input $u$. 
Note that the overall system is composed of $s=|P|$ subsystems. Subsystem $i$ has access to components of the global variable corresponding to $\up i$, and components corresponding to $\down \down i$ are unavailable. One can imagine each subsystem maintaining a local prediction of the global variable. This notion is captured by the following. 
The $i^{th}$ column of $Z$, denoted by $Z^i$ is to be thought of as a local prediction of $z$ at subsystem $i$. The components corresponding to $\down \down i$ correspond to the predictions of the unknown (downstream) components of $z$. Note that $Z_{ii}=z_i$ so that at subsystem $i$ the component $z_i$ of the global variable is available.

We will use the indexing $Z^{i}=[Z^{i}_j]_{j \in P}$, so that $Z^{i}_j$ denotes the local prediction of $z_j$ at subsystem $i$. We will sometimes also denote $Z^{i}_j$ by $z_j(i)$. While local variables in general are full matrices, an important class of local variables that we will encounter will have the property that they are in $\inc$.
The two important local variables we will encounter are $X$ (local state variables) and $U$ (local input variables).
\begin{example}\label{example:local_state}
We illustrate the concepts of global variables and local variables  with an example. Consider the poset shown in Fig. \ref{fig:2.1}(d). Then we can define the global variable $x$ and a corresponding local variable $X$ as follows: \small
\begin{align*}
x=\left[ \ba{c}
x_1 \\ x_2 \\ x_3 \\ x_4
\ea \right] & & &
X=\left[ \ba{cccc}
x_1 & x_1 & x_1 & x_1 \\
x_2(1) & x_2 & x_2(1) & x_2 \\
x_3(1) & x_3(1) & x_3 & x_3 \\
x_4(1) & x_4(2) & x_4(3) & x_4
\ea \rr. \normalsize
\end{align*}
\end{example}

We define the following important product:
\begin{definition} \label{def:product}
Let $\mathbf{F}=\left\{ F(1), \ldots, F(s) \right\}$ be a collection of maps 
$F(i): \down i \times \down i \rightarrow \mathbb{R}$ (viewed as matrices).
Let $X$ be a local variable. We define the \emph{local product} $\mathbf{F}\circ X$ columnwise via
\begin{equation}\label{eq:product}
(\mathbf{F}\circ X)^{i} \triangleq \hat{F}(i)X^{i} \; \; \; \text{for all } i \in P. 
\end{equation}
\end{definition}
\begin{remark}
Note that if $X \in \inc$ and $Y=\mathbf{F} \circ X$, then it is easy to verify that $Y \in \inc$. 
We call the matrices $F(i)$ the \emph{local gains}.
Local products give rise to decoupled local relationships in the following natural way. Let $X, Y$ be local variables. 
If they are related via $Y=\mathbf{F} \circ X$ then the relationship between $X$ and $Y$ is said to be \emph{decoupled}. This is because, by definition,
$$
Y^{k}=\hat{F}(k) X^{k} \; \text{ for all } k \in P.
$$
Thus the maps relating the pairs $(X^{k}, Y^{k})$ are \emph{decoupled} across all $k \in P$ (i.e. $Y^{k}$ depends only on $X^{k}$ and not on $X^{j}$ for any other $j \neq k$). 
\end{remark}
\begin{example}
Continuing with Example \ref{example:local_state}, let us define the local gains by 
$
\mathbf{F}= \left\{ F(1), F(2), F(3), F(4) \right\},
$
where,
\begin{align*}
F(1)&= \ll \ba{cccc}
F_{11}(1) & F_{12}(1) & F_{13}(1) & F_{14}(1) \\
F_{21}(1) & F_{22}(1) & F_{23}(1) & F_{24}(1) \\
F_{31}(1) & F_{32}(1) & F_{33}(1) & F_{34}(1) \\
F_{41}(1) & F_{42}(1) & F_{43}(1) & F_{44}(1) 
\ea \rr & & 
F(2)&= \ll \ba{cccc}
0 & 0 & 0 & 0 \\
0 & F_{22}(2) & 0& F_{24}(2) \\
0 & 0 & 0 & 0 \\
0 & F_{42}(2) & 0 & F_{44}(2) 
\ea \rr \\ 
F(3)&= \ll \ba{cccc}
0 & 0 & 0 & 0 \\
0 & 0 & 0 & 0 \\
0 & 0 & F_{33}(3) & F_{34}(3) \\
0 & 0 & F_{43}(3) & F_{44}(3) 
\ea \rr & & 
F(4)&= \ll \ba{cccc}
0 & 0 & 0 & 0 \\
0 & 0 & 0 & 0 \\
0 & 0 & 0 & 0 \\
0 & 0 & 0 & F_{44}(4) 
\ea \rr. 
\end{align*}
Then 
\begin{align*}
\mathbf{F}\circ X=\left[ \ba{cccc} 
F(1)\ll \ba{c} X_{11} \\ X_{21} \\ X_{31} \\ X_{41} \ea \rr &
F(2)\ll \ba{c} 0 \\ X_{22} \\ 0 \\ X_{42} \ea \rr &
F(3)\ll \ba{c} 0 \\ 0 \\ X_{33} \\ X_{43} \ea \rr &
F(4)\ll \ba{c} 0 \\ 0 \\ 0 \\ X_{44} \ea \rr 
\ea \right].
\end{align*}
\end{example}
\begin{definition} \label{def:matrix_mult}
Let $M \in \mathbb{R}^{s \times s}$ be a matrix. Define
\begin{align*}
\Pi_d(M)=\left\{\ba{c} M_{ij} \; \; \text{for } i \preceq j\\
0 \; \; \text{otherwise.}
\ea \right. & \qquad
\Pi_{{uo}}(M)=\left\{\ba{c} M_{ij} \; \; \text{for } i \npreceq j\\
0 \; \; \text{otherwise.}
\ea \right.
\end{align*}
\end{definition}
Thus the matrix $M$ can be decomposed as
$$
M=M_d+M_{uo}.
$$
 The component $M_d=\Pi_d(M)$ simply corresponds to the ``downstream component'', and is the projection of the matrix $M$ onto the incidence algebra $\inc$ viewed as a subspace of matrices. The component  $M_{uo}=\Pi_{{uo}}(M)$ corresponds to the ``upstream and offstream elements'' and is the projection onto the orthogonal complement.
%

\subsection{The M\"{o}bius and zeta operators}

We first remind the reader of two important order-theoretic notions,
namely the \emph{zeta and M\"{o}bius operators}. These are well-known
concepts in order theory that generalize discrete integration and
finite differences (i.e. discrete differentiation) to posets.
\begin{definition} 
\label{def:zeta} 
Let $\mc{P}=(P,\preceq)$. The \emph{zeta matrix} $\zeta $ is defined
to be the matrix $\zeta:P \times P \rightarrow \mathbb{R}$ such that
$\zeta(i,j)=1$ whenever $j \preceq i$ and zeroes elsewhere. The
\emph{M\"{o}bius matrix} is its inverse, $\mu:=\zeta^{-1}$.
\end{definition}
These matrices may be viewed as operators acting on functions on the poset $f:P \rightarrow \mathbb{R}$ (the functions being expressed as row vectors). The matrices $\zeta, \mu$, which are members of the incidence algebra, act as linear transformations on $f$ in the following way:
\begin{align*}
\zeta: &\mathbb{R}^{|P|} \rightarrow \mathbb{R}^{|P|} & & \mu: \mathbb{R}^{|P|} \rightarrow  \mathbb{R}^{|P|} \\
& f \mapsto f \zeta^{T} & &  f \mapsto f \mu^{T}.
\end{align*}
Note that $\zeta(f)$ is also a function on the poset given by 
\begin{equation} \label{eq:zeta1}
\left( \zeta(f) \right)_{i}=\sum_{j \preceq i} f_j.
\end{equation} 
This may be naturally interpreted as a discrete integral of the function $f$ over the poset.

The role of the M\"{o}bius operator is the opposite: it is a generalized finite difference (i.e. a discrete form of differentiation over the poset). If $f:P \rightarrow \mathbb{R}$ is a local variable then the function $\mu(f): P \rightarrow \mathbb{R}$ may be computed recursively by:
\begin{equation} \label{eq:mobius0}
\left( \mu(f) \right)_i=\left\{ \ba{l} f_i \text{ for } i \text{ a minimal element,} \\
f_i-\sum_{j \prec i } \left(\mu(f)\right)_j \text{ otherwise. } \ea \right.
\end{equation}

\begin{example}
Consider the poset in Figure \ref{fig:2.1}(c). The zeta and the M\"{o}bius matrices are given by:
\begin{align*}
\zeta&=\ll \ba{ccc} 1 & 0 & 0 \\ 1 & 1 & 0 \\ 1 & 1 & 1 \ea \rr &
\mu&=\ll \ba{ccc} 1 & 0 & 0 \\ -1 & 1 & 0 \\ 0 & -1 & 1 \ea \rr.
\end{align*}
If $f=\ll \ba{ccc} f_1 & f_2 & f_3\ea \rr$, then
\begin{align*}
\zeta(f)&=\ll \ba{ccc} f_1 & f_1+f_2 & f_1+f_2+f_3 \ea \rr \\
\mu(f)&=\ll \ba{ccc} f_1 & f_2-f_1 & f_3-f_2 \ea \rr.
\end{align*}
\begin{figure}[htbp]
  \begin{center}
  \includegraphics[scale=0.35]{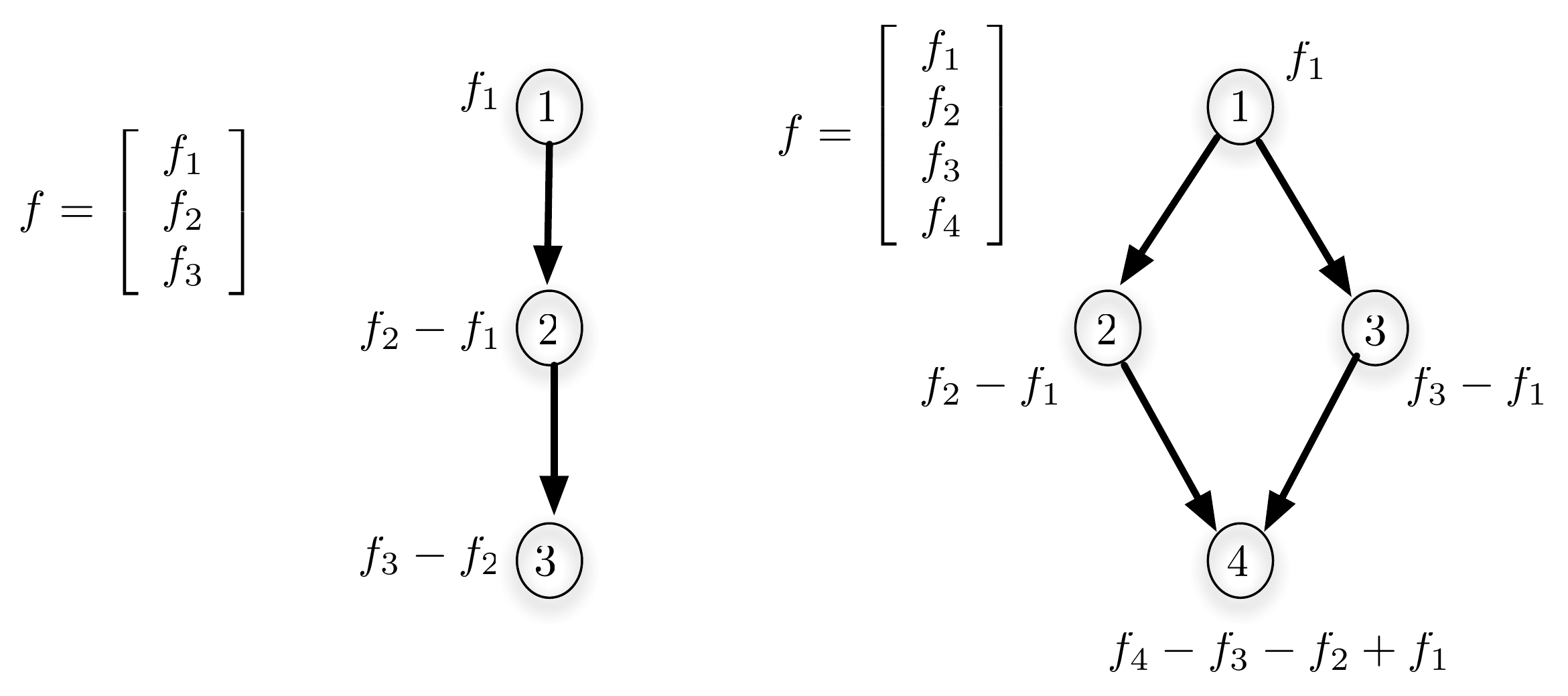}
  \end{center}
  \caption{Two posets with their M\"{o}bius operators. The functions $f$ are functions on the the posets, and the values of $\mu(f)$ at element $i$ are indicated next the the relevant elements.}
  \label{fig:2}
\end{figure}
\end{example}

We now define modified versions of the zeta and M\"{o}bius operators that extend the actions of $\mu$ and $\zeta$ from global variables $x$ to local variables $X$. Let $\zeta$ and $\mu$ be matrices as defined in Definition \ref{def:zeta}.
\begin{definition}\label{def:mobius_op}
Let $X$ be a local variable. 
Define the operators $\mu:\mathbb{R}^{s \times s} \rightarrow \inc$ and $\zeta: \mathbb{R}^{s \times s} \rightarrow \inc$ acting via
\begin{align}\label{eq:mobius}
\zeta(X)=\Pi_d(X\zeta^{T}) & & & \mu(X)=\Pi_d(X \mu^{T}).
\end{align}
\end{definition}
\begin{lemma} \label{lemma:mu1}
The operators $\zeta$ and $\mu$ may be written more explicitly as
\begin{align}\label{eq:mobius2}
\zeta(X)^{i}_j &\triangleq \sum_{k \preceq i} X^{k}_j  & & &\mu(X)^{i}_j &\triangleq X^{i}_j- \sum_{k \prec i} \mu(X)^{k}_j
\end{align}
for $i \preceq j$ and $0$ otherwise.
\end{lemma}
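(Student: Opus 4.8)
The plan is to read off both identities directly from the matrix definitions in \eqref{eq:mobius}, using the convention $Z^{i}_{j}=Z_{ji}$ together with two elementary support facts: by Definition \ref{def:zeta} we have $\zeta_{ik}=1$ exactly when $k\preceq i$ (and $0$ otherwise), and since $\mu=\zeta^{-1}$ lies in $\inc$, its entries satisfy $\mu_{ik}=0$ unless $k\preceq i$. Throughout, $\Pi_d$ is to be understood as the projection onto $\inc$, which in the $Z^{i}_{j}$ indexing retains precisely the entries with $i\preceq j$ and zeroes the rest.

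First I would handle $\zeta$, which is essentially immediate. Before projecting, the $(j,i)$ entry of $X\zeta^{T}$ is $(X\zeta^{T})^{i}_{j}=\sum_{k}X_{jk}\,\zeta_{ik}=\sum_{k\preceq i}X^{k}_{j}$, a plain column-aggregation of $X$ over the upstream set of $i$. Applying $\zeta(X)=\Pi_d(X\zeta^{T})$ then merely keeps the entries indexed by $i\preceq j$, which gives exactly $\zeta(X)^{i}_{j}=\sum_{k\preceq i}X^{k}_{j}$ for $i\preceq j$ and $0$ otherwise.

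For $\mu$ I would first establish the recursion for the un-projected quantity $Y:=X\mu^{T}$, whose columns are $Y^{i}=\sum_{k\preceq i}\mu_{ik}X^{k}$. The claimed recursion is equivalent to the column-sum identity $\sum_{k\preceq i}Y^{k}=X^{i}$, which I would prove by interchanging the order of summation and invoking $\zeta\mu=I$: one obtains $\sum_{k\preceq i}Y^{k}=\sum_{l}\bigl(\sum_{k:\,l\preceq k\preceq i}\mu_{kl}\bigr)X^{l}$, and the inner coefficient is exactly $(\zeta\mu)_{il}=\delta_{il}$, leaving $X^{i}$. Isolating the top term then yields $Y^{i}=X^{i}-\sum_{k\prec i}Y^{k}$.

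The last and most delicate step is to transfer this recursion from $Y$ to $\mu(X)=\Pi_d(Y)$, since a priori $Y^{k}_{j}$ and $\mu(X)^{k}_{j}$ disagree on the entries removed by $\Pi_d$. The point I would emphasize is that the projection is harmless here thanks to transitivity: whenever $i\preceq j$ and $k\prec i$, we automatically have $k\preceq i\preceq j$, hence $k\preceq j$, so $\mu(X)^{k}_{j}=Y^{k}_{j}$ for every index occurring in the sum. Substituting $\mu(X)$ for $Y$ throughout the recursion gives $\mu(X)^{i}_{j}=X^{i}_{j}-\sum_{k\prec i}\mu(X)^{k}_{j}$ for $i\preceq j$, while $\mu(X)^{i}_{j}=0$ for $i\npreceq j$ because $\Pi_d$ projects onto $\inc$. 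This compatibility of the projection with the recursion is the only genuinely non-mechanical ingredient; the remainder is bookkeeping with the two support facts stated at the outset.
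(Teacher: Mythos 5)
Your proof is correct and follows essentially the same route as the paper, which simply invokes the scalar formulas \eqref{eq:zeta1} and \eqref{eq:mobius0} applied row-wise before projecting; you have merely filled in the details (the column-aggregation identity for $X\zeta^{T}$, the recursion for $X\mu^{T}$ via $\zeta\mu=I$, and the transitivity argument showing $\Pi_d$ is compatible with the recursion). No gaps.
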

\begin{proof}
The proofs follow in a straightforward fashion from \eqref{eq:zeta1} and \eqref{eq:mobius0}.
\end{proof}
Note that if $Y=\mu(X)$ then $Y$ is a local variable in $\inc$.
The operator $\zeta$ has the natural interpretation of \emph{aggregating or integrating} the local variables $X^k$ for $k \in P$, whereas $\mu$ performs the inverse operation of differentiation of the local variables. 
\begin{example} \label{example:mu}
We illustrate the action of  $\mu$ acting on a local variable. Consider the local variable $X$ from Example \ref{example:local_state}. 
It is easy to verify that 
\begin{align*}
\scriptsize
\mu(X)= \ll \ba{cccc}  
x_1 & 0 & 0 & 0 \\
x_2(1) & x_2-x_2(1) & 0 & 0 \\
x_3(1) & 0 & x_3-x_3(1) & 0 \\
x_4(1) & x_4(2)-x_4(1) & x_4(3)-x_4(1) & x_4-x_4(3)-x_4(2)+x_4(1)
\ea \rr.
\normalsize
\end{align*}
\end{example}

\begin{lemma} \label{lemma:properties}
The operators $(\mu, \zeta)$ satisfy the following properties:
\begin{enumerate}
\item $(\mu, \zeta)$ are invertible restricted to $\inc$ and are inverses of each other so that for all local variables $X \in \inc$,
$$
\zeta(\mu(X))=\mu(\zeta(X))=X.
$$
\item $\mu(X)=\mu(\Pi_d(X))$ and $\zeta(X)=\zeta(\Pi_d(X))$. 
\item Let $A,X \in \inc$. Then $\mu(AX)=A\mu(X)$, and $\zeta(AX)=A\zeta(X)$.
\end{enumerate}
\end{lemma}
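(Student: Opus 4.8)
The plan is to reduce all three parts to a single computational fact about how the projection $\Pi_d$ interacts with right-multiplication by a member of the incidence algebra. Concretely, I would first establish the claim: \emph{for any matrix $M$ and any $L \in \inc$, one has $\Pi_d\big(\Pi_{uo}(M)\,L^{T}\big)=0$, equivalently $\Pi_d(ML^{T})=\Pi_d\big(\Pi_d(M)L^{T}\big)$.} The proof is a one-line index chase: the $(a,b)$ entry of $\Pi_{uo}(M)L^{T}$ in a downstream position ($b\preceq a$) is $\sum_c \Pi_{uo}(M)_{ac}L_{bc}$, and a nonzero summand forces $c\npreceq a$ (from $\Pi_{uo}$) together with $c\preceq b$ (from $L\in\inc$); but $c\preceq b\preceq a$ yields $c\preceq a$ by transitivity, a contradiction, so the sum is empty. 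This uses only $L\in\inc$, hence applies verbatim to $L=\zeta$ and $L=\mu$.

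Granting this claim, Part 2 is immediate: $\mu(X)-\mu(\Pi_d X)=\Pi_d\big((X-\Pi_d X)\mu^{T}\big)=\Pi_d\big(\Pi_{uo}(X)\mu^{T}\big)=0$, and identically for $\zeta$. For Part 1, I would write $\zeta(\mu(X))=\Pi_d\big(\Pi_d(X\mu^{T})\zeta^{T}\big)$ and invoke the equivalent form of the claim (with $M=X\mu^{T}$, $L=\zeta$) to discard the inner projection, obtaining $\Pi_d\big(X\mu^{T}\zeta^{T}\big)$. Since $\mu=\zeta^{-1}$ gives $\mu^{T}\zeta^{T}=(\zeta\mu)^{T}=I$, this equals $\Pi_d(X)=X$ for $X\in\inc$; the computation of $\mu(\zeta(X))$ is symmetric, using $\zeta^{T}\mu^{T}=(\mu\zeta)^{T}=I$. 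Thus $\mu$ and $\zeta$ are two-sided inverses on $\inc$, and in particular invertible there.

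Part 3 is where I expect the real difficulty. Writing $\mu(AX)=\Pi_d\big(A(X\mu^{T})\big)$ by associativity and splitting $X\mu^{T}=\mu(X)+\Pi_{uo}(X\mu^{T})$, one gets $\mu(AX)=A\mu(X)+\Pi_d\big(A\,\Pi_{uo}(X\mu^{T})\big)$, using that $A\mu(X)\in\inc$ is fixed by $\Pi_d$. So everything reduces to showing $\Pi_d\big(A\,\Pi_{uo}(X\mu^{T})\big)=0$. The obstacle is that this is a statement about \emph{left}-multiplication by $A\in\inc$, and, in contrast to the right-multiplication claim above, left multiplication does \emph{not} commute with $\Pi_d$ in general: the analogous index chase fails because two elements lying below a common element need not themselves be comparable. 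The crux is therefore to exploit the structure of the local variables actually in play rather than an arbitrary element of $\inc$; the clean route is to show that for the relevant $X$ the projection is inert, i.e. $X\mu^{T}\in\inc$ (and $X\zeta^{T}\in\inc$), so that $\Pi_{uo}(X\mu^{T})=0$ and Part 3 collapses to the trivial associativity identity $\mu(AX)=AX\mu^{T}=A\mu(X)$ (and likewise for $\zeta$). Pinning down exactly that inertness property on the intended domain is the step I would expect to require the most care.
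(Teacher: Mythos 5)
Your treatment of Parts 1 and 2 is correct, and it takes a genuinely different route from the paper: the paper proves both by induction along the poset using the recursive characterization of Lemma \ref{lemma:mu1}, whereas you reduce everything to the single algebraic identity $\Pi_d\bigl(\Pi_{uo}(M)L^{T}\bigr)=0$ for $L\in\inc$ and then invoke $\mu^{T}\zeta^{T}=\zeta^{T}\mu^{T}=I$. Your index chase is sound ($c\preceq b\preceq a$ contradicts $c\npreceq a$), and the resulting proofs are shorter and more transparent than the inductions in the paper; what the paper's route buys instead is a columnwise recursion that is reused elsewhere.

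On Part 3 you have correctly located the real difficulty, but the route you sketch cannot be completed under the lemma's stated hypothesis, because Part 3 is in fact false as stated. Take the two-chain $1\prec 2$, so $\mu^{T}=\left[\begin{smallmatrix}1&-1\\0&1\end{smallmatrix}\right]$, and let $A=\left[\begin{smallmatrix}\alpha&0\\ \beta&\gamma\end{smallmatrix}\right]$, $X=\left[\begin{smallmatrix}a&0\\ b&c\end{smallmatrix}\right]$, both in $\inc$. Then $\mu(X)=\left[\begin{smallmatrix}a&0\\ b&c-b\end{smallmatrix}\right]$, so the $(2,2)$ entry of $A\mu(X)$ is $\gamma(c-b)$, while $AX=\left[\begin{smallmatrix}\alpha a&0\\ \beta a+\gamma b&\gamma c\end{smallmatrix}\right]$ gives the $(2,2)$ entry of $\mu(AX)$ as $\gamma c-\beta a-\gamma b$; the two differ by $\beta a$. (The paper's own proof of Part 3 hides exactly this: the step $\mu(AX)^i_j=\sum_{j\preceq k\preceq i}A_{ik}(X\mu^{T})^k_j$ silently discards the summands with $k\preceq i$ but $j\npreceq k$, where $X\mu^{T}$ need not vanish.) In particular the inertness property $X\mu^{T}\in\inc$ that you hope to pin down fails for general $X\in\inc$: in the example $X\mu^{T}$ has the off-pattern entry $-a$. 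The correct repair is the one your last paragraph is groping toward, but on a different domain: Part 3 holds for $X$ of the form $X=Y\zeta^{T}$ with $Y\in\inc$, i.e.\ for the full local variables of Definition \ref{def:local_state}, which are generally \emph{not} in $\inc$; for those, $X\mu^{T}=Y\in\inc$, $\Pi_{uo}(X\mu^{T})=0$, and your associativity argument closes instantly. That restricted version is also what is actually needed later, e.g.\ in passing from \eqref{eq:local_dyn} to \eqref{eq:dynamics2}. So your Parts 1 and 2 stand, while your Part 3 is incomplete — and the missing step is unprovable without replacing the hypothesis $X\in\inc$ by the one above.
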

\begin{proof}
\begin{enumerate}
\item The proof is by induction. 
For a minimal element $i$, it is clear from Lemma \ref{lemma:mu1} that $\mu(\zeta(X))^i_j=\zeta(X)^i_j=X^i_j$. Now suppose $i$ is non-minimal. As the induction hypothesis, suppose the statement $\mu(\zeta(X))^k_j=X^k_j$ is true for all $k \prec i$. We prove the assertion is true for $i$. By Lemma \ref{lemma:mu1}, we have 
\begin{align*}
\mu(\zeta(X))^i_j&=\zeta(X)^i_j - \sum_{k \prec i} \mu(\zeta(X))^k_j \\
&=\zeta(X)^i_j-\sum_{k \prec i} X^k_j \; \; \text{(by induction hypothesis)} \\
&=X^i_j. 
\end{align*}
The proof that $\zeta(\mu(X))=X$ is similar.
\item For $j \npreceq i$, $\mu(X)^i_j=\mu(\Pi_d(X))^i_j=0$. Using Lemma \ref{lemma:mu1}, for $j \preceq i$, it is routine to check inductively that $\mu(X)^i_j$ depends only on the values of $X^k_j$ for $k \preceq i$. Hence $\mu(X)=\mu(\Pi_d(X) )$. The proof that $\zeta(\Pi_d(X))=\zeta(X)$ is similar.

\item Note that $\mu(X) \in \inc$, hence $A\mu(X) \in \in$, and hence $(A\mu(X))^{i}_j=\mu(A(X))^i_j=0$ for $j \npreceq i$. Now note that $\mu(AX)=\Pi_d(AX\mu^T)$, so that for $j \preceq i$,  
\begin{align*}
\mu(AX)^i_j&=\sum_{j \preceq k \preceq i}A_{ik}(X\mu^T)^k_j 
=\sum_{j \preceq k \preceq i}A_{ik}(\mu(X))^k_j 
=A\mu(X)^k_j.
\end{align*}
The proof that $\zeta(AX)=A\zeta(X)$ is similar.
\end{enumerate}
\end{proof}
Note that if $X \notin \inc$ then $\zeta(\mu(X))=\Pi_d(X)$. The second part of the preceding lemma says that $\mu(X)$ and $\zeta(X)$ \emph{depend only on the components of $X$ that lie in $\inc$}, i.e. on $\Pi_d(X)$. 

Since $\zeta$ and $\mu$ may be interpreted as integration and differentiation operators, the first part of the above lemma may be viewed as a ``poset''  version of the fundamental theorem of calculus. 

\section{Proposed Architecture} \label{sec:general_poset}
\subsection{Local States and Local Inputs}
Having defined local and global variables, we now specialize these concepts to our state-space system \eqref{eq:2}. We will denote $x_j$ to be the true state at subsystem $j$. We denote $x_j(i)$ to be a prediction of state $x_j$ at subsystem $i$. The information constraints at subsystem $i$ can be decoupled into three distinct cases: 
\begin{itemize}
\item Information about $\down i$: At subsystem $i$ the state information for $\down \down i$ unavailable, so a (possibly partial) prediction of $x_j$ for $j \in \down \down i$ is formed. We denote this prediction by $x_j(i)$. \emph{Computing these partial predictions is the role of the controller states}. The state $x_i$ is known at subsystem $i$. In the following discussion we will call the downstream predictions (as well as the true state $x_i$) at subystem $i$ the ``free variables in the architecture''.
 \item Information about $\up \up i$: Complete state information about $x_j$ for $j \in \up i$ is available, so that $x_j(i)=x_j$. Moreover, the predictions from upstream $x_k(j)$ for all $k \in P$ and $j \preceq i$  are also available. 
\item Information about $\down \up i$: At subsystem $i$, state information about $x_j$ for $j$ not comparable to $i$ is unavailable. The prediction of $x_j$ is computed using $x_j(k)$ for $k \prec i$. 
\end{itemize}
Analogous information constraints hold also for the inputs. At a particular subsystem, information about downstream inputs is not available. Consequently, we introduce the notion of prediction of unknown inputs, with similar notation as that for the states.
These ideas can be formalized by defining local variables that capture the best available information at the subsystems.
We introduce two local variables:
\begin{enumerate}
\item The local state $X$ associated with the system state $x$,
\item The local input $U$ associated with the controller input $u$.
\end{enumerate}
\begin{definition} \label{def:local_state}
The local state $X$ is a local variable that is consistent with the global state $x$ (i.e. $X^i_i=x_i$ for all $i \in P$) and satisfies the following additional properties.
\begin{enumerate}
\item $X_d:=\Pi_d(X)$ are free variables
\item The local variable $X$ and its component $X_{uo}:=\Pi_{uo}(X)$ are determined from $X_d$ via 
\begin{equation} \begin{split} \label{eq:X_{uo}1}
X_{uo}&=\Pi_{uo}(\mu(X_d) \zeta^T) \\ 
 X&=X_d+X_{uo}=\mu(X_d) \zeta^{T}.
\end{split}
\end{equation}
\end{enumerate}
\end{definition}
Note that the second equation in \eqref{eq:X_{uo}1} follows from the first and the fact that $X=X_d+X_{uo}$. This can be seen by noting that 
\begin{equation*}
X_d+\Pi_{uo}(\mu(X_d)\zeta^T)=\mu(\zeta(X_d))+\Pi_{uo}(\mu(X_d)\zeta^T)=\Pi_d(\mu(X_d)\zeta^T)\Pi_{uo}(\mu(X_d)\zeta^T)=\mu(X_d)\zeta^T.
\end{equation*}

We have an analogous definition for the local input:
\begin{definition}
The local input $U$ is a local variable that is consistent with the global input $u$ (i.e. $U^i_i=u_i$ for all $i \in P$) and satisfies the following additional properties.
\begin{enumerate}
\item $U_d:=\Pi_d(U)$ are free variables
\item The local variable $U$ and its component $U_{uo}=\Pi_{uo}(U)$ are determined from $U_d$ via 
\begin{equation}  \begin{split}\label{eq:U_{uo}}
U_{uo}&=\Pi_{uo}(\mu(U_d) \zeta^T) \\
U&=U_d+U_{uo}=\mu(U_d) \zeta^T. \end{split}\end{equation}
\end{enumerate}
\end{definition}
\begin{remark}
Definition \ref{def:local_state} has been carefully made so that the local variable $X$ enjoys some desirable properties.
At any subsystem, the local state $X^i$ can be decomposed into the downstream and the upstream/offstream components. We mention the properties below:
\begin{itemize}
\item The collection of downstream components of $X$ are the free variables $X_d$. 
\item The upstream and offstream components of $X$ are determined from the free variables $X_d$. Indeed, the reader may verify that as a consequence of \eqref{eq:X_{uo}1}, if $j \prec i$ then $X^i_j=x_j=(X_d)^j_j$ (this is intuitive since at subsystem $i$, $x_j$ is known if $j \prec i$). The predictions of offstream states are also computed using $X_d$. Equation \eqref{eq:X_{uo}1} implies that for offstream states (i.e. $i$ and $j$ uncomparable) $X^i_j=\sum_{k \prec i, k \prec j} \mu(X)^k_j$.
\item Our setup also ensures that $\mu(X)^i_j=0$ if $j \prec i$ or if $i$ and $j$ are incomparable. (This is because from Lemma \ref{lemma:properties}, $\mu(X)=\mu(X_d) \in \inc$). We will later see that this has a natural interpretation.
\end{itemize}
\end{remark}
\begin{example}
  Consider the poset shown in Fig. \ref{fig:2.1}(d). The matrix $X$
  shown in Example \ref{example:local_state} is a local state
  variable. The predicted partial states are $x_2(1), x_3(1), x_4(1),
  x_4(2), x_4(3)$. The plant states are $x_1, x_2, x_3, x_4$. These collectively are the free variables $X_d$. The components $X_d$ and $X_{uo}$ are given by
\begin{align*}
X_{d}=\left[ \ba{cccc}
x_1 & 0 &0 & 0 \\
x_2(1) & x_2 & 0 & 0 \\
x_3(1) & 0 & x_3 & 0 \\
x_4(1) & x_4(2) & x_4(3) & x_4
\ea \rr \qquad
X_{uo}=\left[ \ba{cccc}
0 & x_1 & x_1 & x_1 \\
0 & 0 & x_2(1) & x_2 \\
0 & x_3(1) & 0 & x_3 \\
0 & 0 & 0 & 0
\ea \rr.
\end{align*}
The reader may verify that $X_{uo}$ satisfies \eqref{eq:X_{uo}1}.
  Note
  that since subsystems $1$ and $2$ have the same information about
  subsystem $3$ ($2$ and $3$ are unrelated in the poset), the best
  estimate of $x_3$ at subsystem $2$ is $x_3(1)$.
\end{example}

\subsection{Role of $\mu$}
We now give a natural interpretation of the operator $\mu(X)$ in terms of the differential improvement in predicted states with the help of an example.
\begin{example}
Consider the poset shown in Fig.~\ref{fig:diff_imp}, and let us inspect the predictions of the state $x_5$ at the various subsystems.
\begin{figure}[htbp]
  \begin{center}
  \includegraphics[scale=0.5]{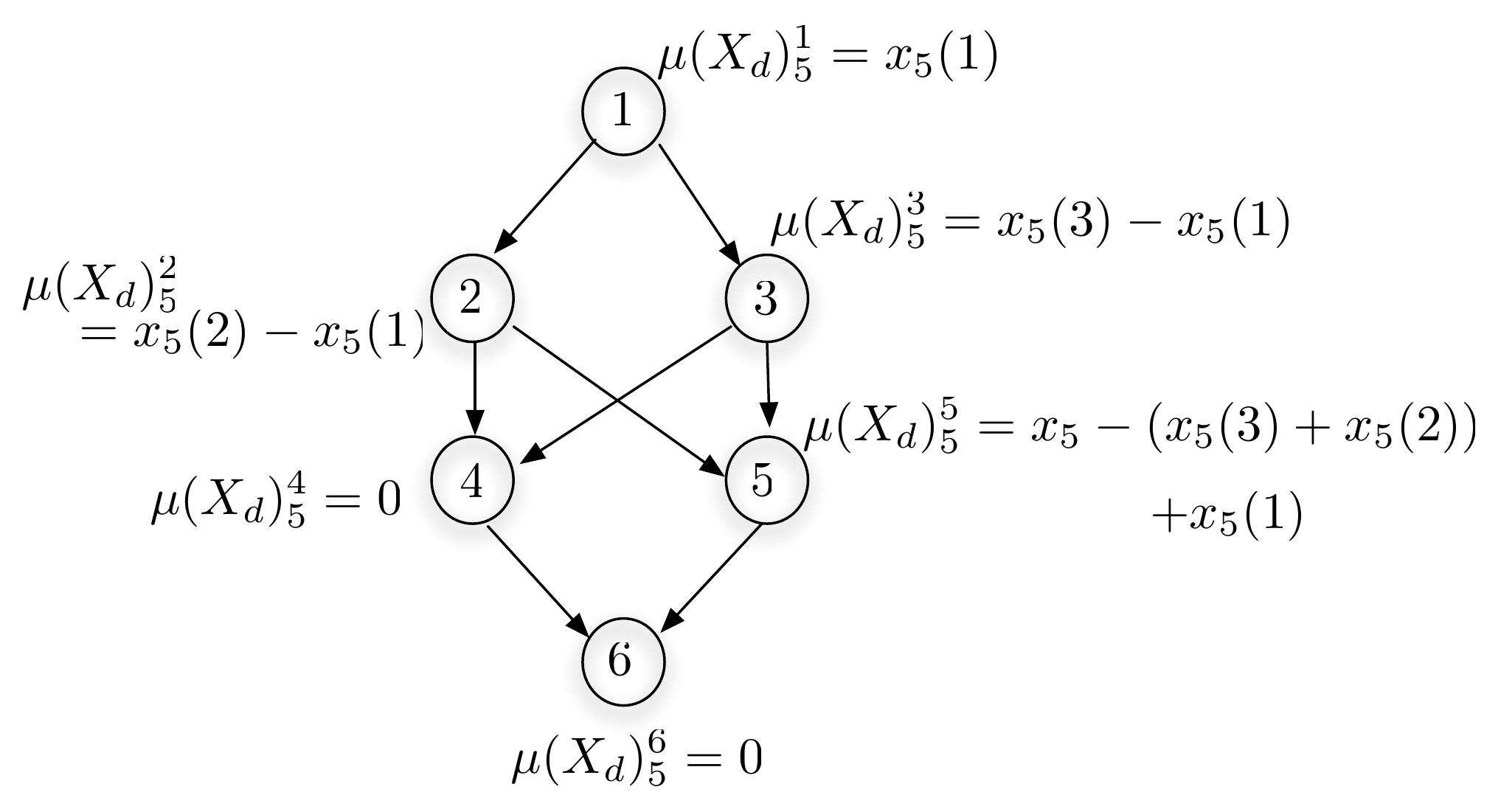}
  \end{center}
  \caption{Poset showing the differential improvement of the prediction of state $x_5$ at various subsystems.}
  \label{fig:diff_imp}
  \end{figure}
The prediction of $x_5$ at subsystem $1$ is $x_5(1)$ and the prediction of $x_5$ at subsystem $2$ is $x_5(2)$. The differential improvement in the prediction of $x_5$ at subsystem $2$ regarding the state $x_5$ is $x_5(2)-x_5(1)$. At subsystems $3$ the formula for the differential improvement is similar. The differential improvement in $x_5$ at subsystem $4$ is zero (since $4$ is offstream with respect to $5$). The differential improvement for $x_5$ at subsystem $6$ is zero (since $6$ is downstream with respect to $5$). These differential improvements are depicted in Fig. \ref{fig:diff_imp}. \emph{Capturing these differential improvements is precisely the role of the M\"{o}bius operator}.\end{example} 

We can now understand the role of \eqref{eq:X_{uo}1} better. Let $X$ be a local state and let $X_i$ denote the $i^{th}$ row of $X$. The $j^{th}$ entry of $X_i$ (i.e. $X^j_i$) corresponds to the prediction of $x_i$ at subsystem $j$, so that the row vector $X_i$ summarizes the predictions of the state $x_i$ at the different subsystems. 

Recalling \eqref{eq:X_{uo}1}, we have 
$$
X=\mu(X_d) \zeta^T.
$$
Restricting to the $i^{th}$ row and rewriting we have that
$$
X_i=\mu(X_d)_i \zeta^T = \sum _{k \preceq i} \mu(X_d)_i^k.
$$
The natural interpretation here is that in our architecture the \emph{``atoms'' for state prediction} are the differential improvements (or increments) $\mu(X_d)$. To compute the state prediction at subsystem $i$ one simply ``integrates'' (via $\zeta$) the increments corresponding to upstream subsystems $\up i$. This is depicted in Fig.~\ref{fig:increment}.

%
\begin{figure}[htbp]
  \begin{center}
  \includegraphics[scale=0.5]{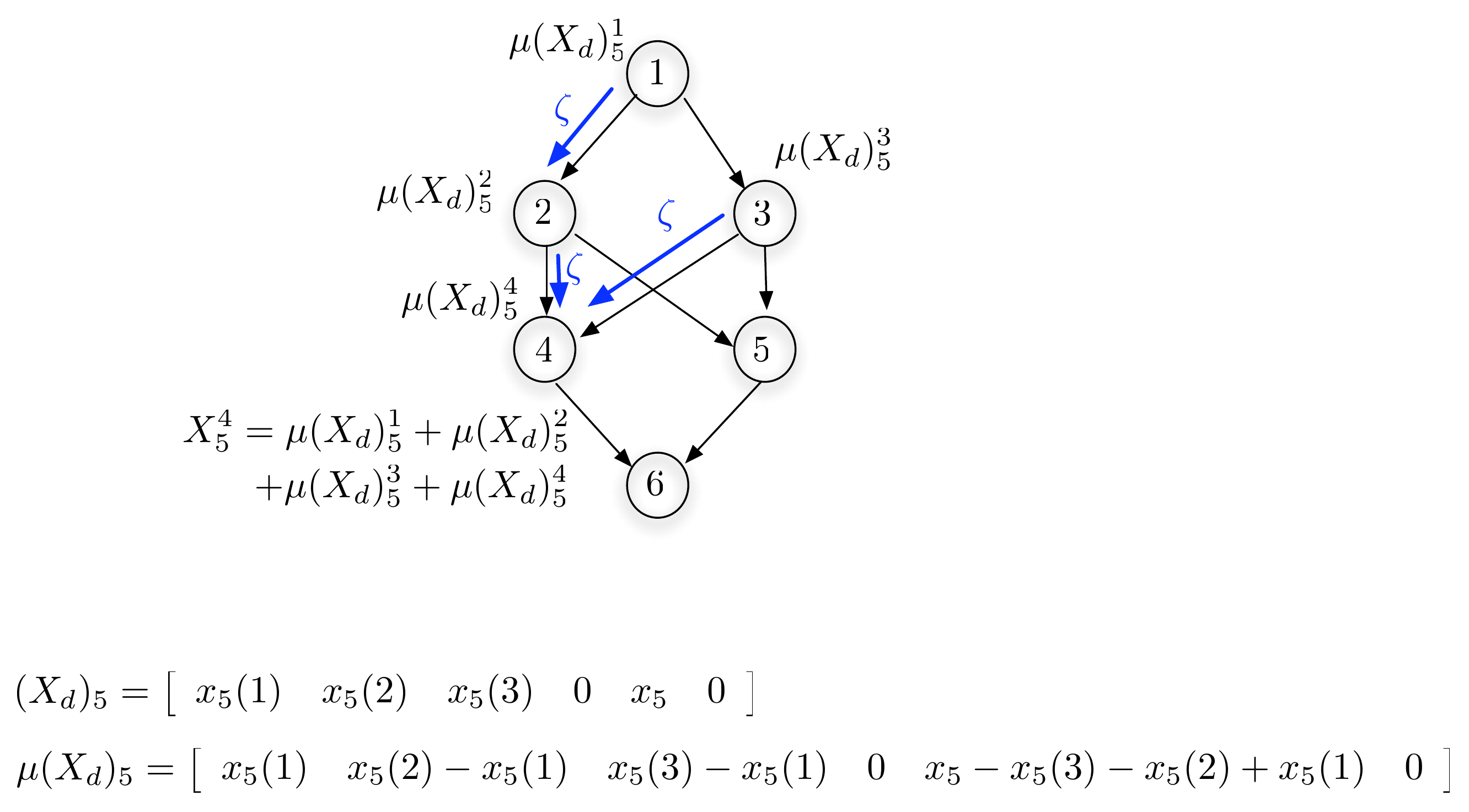}
  \end{center}
  \caption{This figure shows how the partial prediction of the state $x_5$ is computed at subystem $4$ using \eqref{eq:X_{uo}1}. States that are upstream of $4$ make predictions using free variables as reflected in the free variables $(X_d)_5$. The increments or differential improvements $\mu(X_d)_5$ will then form the ``atoms'' for prediction. To compute the prediction at subsystem $4$, one simply adds increments that are upstream with respect to $4$. This computation yields $X_5^4=\mu(X)_5^1+\mu(X)_5^2+\mu(X)_5^3+\mu(X)_5^4 = x_5(3)+x_5(2)-x_5(1)$. 
}
  \label{fig:increment}
  \end{figure}
  \begin{remark}
  As remarked earlier, our setup ensures that the increment $\mu(X_d)^i_j=0$ for $j \prec i$ or if $j$ and $i$ are not comparable. We briefly explain the intuition behind this. If $i$ is upstream of $j$, both subsystem $i$ and $j$ have access to the full state $x_j$ and hence there is no differential improvement in the state prediction. Furthermore, if $i$ and $j$ are unrelated, then $i$ carries no additional information about $x_j$ and again it is natural to have the differential improvement be zero.
  \end{remark}

Before we proceed we clarify that the entries of $X$ correspond to \emph{partial} predictions. We clarify the notion of a partial prediction with an example.
\begin{example}
Consider the system composed of three subsystems with $P=\left\{1,2,3 \right\}$ with $1 \preceq 3$ and $2 \preceq 3$ (see Fig. \ref{fig:partial_pred}):
$$
\left[ \begin{array}{c} \dot{x}_1  \\ \dot{x}_2\\ \dot{x}_3 \end{array} \right]= \left[ \begin{array}{ccc} A_{11} & 0 & 0 \\ 0 & A_{22} & 0 \\ A_{31} & A_{32} & A_{33} \end{array}\right]
\left[ \begin{array}{c} x_1  \\ x_2\\ x_3 \end{array} \right] + \left[ \begin{array}{ccc} B_{11} & 0 & 0 \\ 0 & B_{22} & 0 \\ B_{31} & B_{32} & B_{33} \end{array}\right]\left[ \begin{array}{c} u_1  \\ u_2\\ u_3 \end{array} \right].
$$
\begin{figure}[htbp]
  \begin{center}
  \includegraphics[scale=0.5]{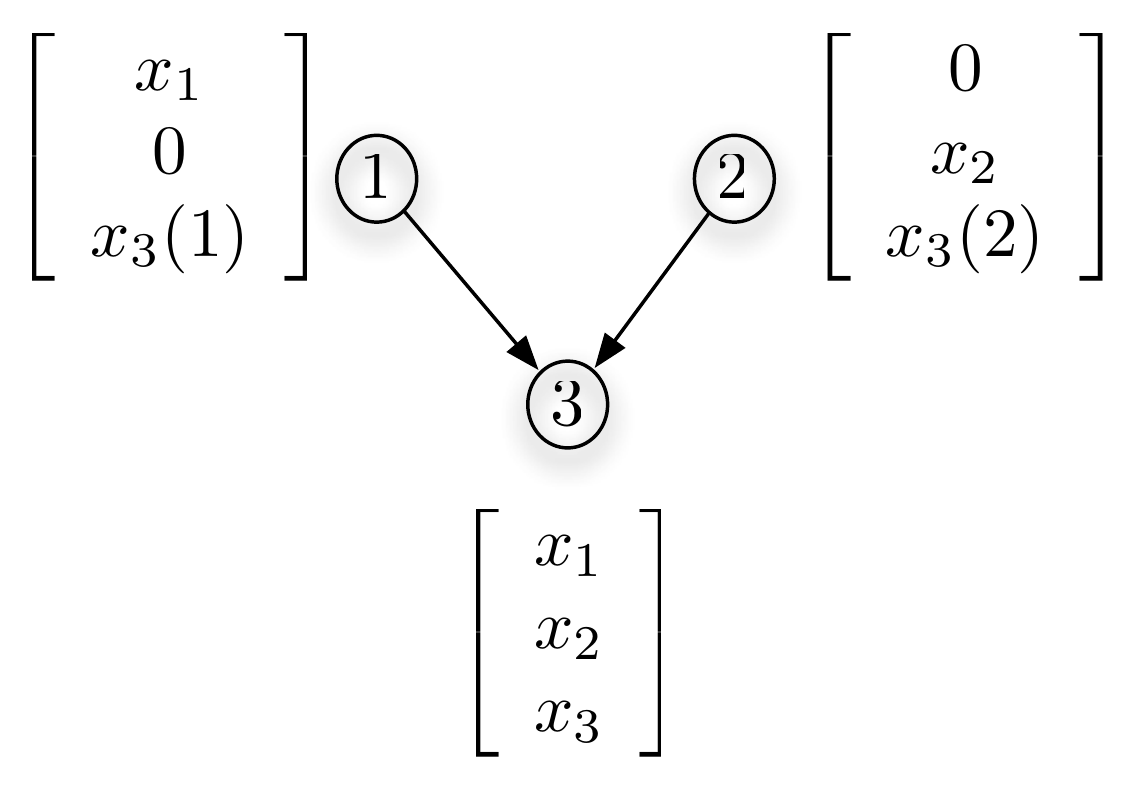}
  \end{center}
  \caption{Local state information at the different subsystems. The quantities $x_3(1)$ and $x_3(2)$ are partial state predictions.}
  \label{fig:partial_pred}
  \end{figure}

Note that subsystem $1$ has no information about the state of subsystem $2$. Moreover, the state $x_1$ or input $u_1$ does not affect the dynamics of $2$ (their respective dynamics are uncoupled). Hence the only sensible prediction of $x_2$ at subsystem $1$ is $x_2(1)=0$ (the situation for $u_2(1)$ is identical). However, both the states $x_1$, $x_2$ and inputs $u_1$, $u_2$ affect $x_3$ and $u_3$. Since $x_2$ and $u_2$ are unknown, the state $x_3(1)$ can at best be a \emph{partial} prediction of $x_3$ (i.e. $x_3(1)$ is the prediction of the \emph{component of} $x_3$ that is affected by subsystem $1$). Similarly $x_3(2)$ is only a partial prediction of $x_3$. Indeed, one can show that $x_3(1)+x_3(2)$ is a more accurate prediction of the state $x_3$, and when suitably designed, their sum converges to the true state $x_3$.
\end{example}

\subsection{Control Law}
We now formally propose the following control law:
\begin{equation} \label{eq:control_law}
U_d=\zeta(\mathbf{F} \circ \mu(X)).
\end{equation}
We make the following remarks about this control law.
\begin{remarks}
\begin{enumerate}
\item We note that \eqref{eq:control_law} specifies $U_d$ which amounts to specifying the input $(U_d)^i_i=u_i$ for all $i \in P$. It also specifies $(U_d)^i_j=u_j(i)$ for $i \prec j$ which is the prediction of the input $u_j$ at an downstream subsystem $i$.
\item 
Since $\hat{F}(i)$ is non-zero only on rows and columns in $\down i$, the controller 
respects the information constraints. Thus for any choice of gains $F(i)$, the resulting controller respects the information constraints. In this sense \eqref{eq:control_law} may be viewed as a parameterization of controllers.
\item The control law \eqref{eq:control_law} may be alternatively written as $U_d^i=\sum_{k \preceq i} F(k) \mu(X)^k$. The control law has the following interpretation. If $i$ is a minimal element of the poset $\mc{P}$, then $\mu(X)^i=X_d^i$, the vector of partial predictions of the state at $i$.  The local control law uses these partial predictions with the gain $F(i)$. If $i$ is a non-minimal element it aggregates all the control laws from $\up \up i$ and adds a \emph{correction term} based on the differential improvement in the global state-prediction $\mu(X)^i$. This correction term is precisely $F(i)\mu(X)^i$.
\end{enumerate}
\end{remarks}

\begin{example}
Consider a poset causal system where the underlying poset is shown in Fig \ref{fig:2.1}(d). 
The controller architecture described above is of the form $U_d^i=\sum_{k \prec i}F(k) \mu(X)^k$ (where $U^i$ is a vector containing the predictions of the global input at subsystem $i$). Noting that $(U_d)^i_i=u_i$, we write out the control law explicitly to obtain:
\begin{align*} \small
\ll \ba{c} u_1 \\ u_2 \\ u_3 \\ u_4\ea \rr=F(1)\ll \ba{c} x_1 \\ x_2(1) \\ x_3(1) \\ x_4(1) \ea \rr +F(2)\ll \ba{c} 0 \\ x_2-x_2(1) \\ 0 \\ x_4(2)-x_4(1) \ea \rr+
   F(3)\ll \ba{c} 0 \\ 0 \\ x_3-x_3(1) \\ x_4(3)-x_4(1) \ea \rr +F(4)\ll \ba{c} 0 \\ 0 \\ 0 \\ x_4-x_4(2)-x_4(3)+x_4(1) \ea \rr. 
\end{align*} \normalsize

\end{example}

\subsection{State Prediction}
Recall that at subsystem $i$ the states $x_j$ for $j \in \down \down
i$ are unavailable and must be predicted.  Typically, one would
predict those states via an observer. However, those states are
\emph{unobservable}; only the state $x_k$ for $k \in \up i$ are
observable, and are in fact directly available. In this situation,
rather than using an observer one constructs a \emph{predictor} to
predict the unobservable states. These predictions are computed by
the controller via prediction dynamics, which we now specify. 

Since the dynamics of the true state evolve according to $\dot{x}(t)=Ax(t)+Bu(t)+w(t)$, each subsystem can simulate these dynamics using the local states and inputs. Locally each subsystem implements the dynamics $\dot{X}^i(t)=AX^i(t)+BU^{i}(t)$. This can be compactly written as 
\begin{align}\label{eq:local_dyn}
\dot{X}(t)=AX(t)+BU(t).
\end{align}
We remind the reader that $X=X_d+X_{uo}$, and that $X_{uo}$ (consisting of upstream and offstream components) is determined from $X_d$ via \eqref{eq:X_{uo}1}. Consequently, the components in $X_{uo}$ are not free variables and one needs to check that \eqref{eq:local_dyn} constitutes a consistent set of differential equations. Projecting \eqref{eq:local_dyn} onto orthogonal components using $\Pi_d$ and $\Pi_{uo}$ we obtain
\begin{align*}
\dot{X}_d(t)=\Pi_d(AX(t)+BU(t)) \qquad \dot{X}_{uo}(t)=\Pi_{uo}(AX(t)+BU(t)).
\end{align*}
Before checking consistency, we simplify the differential equation for $X_d$ (the ``free variables''). The off-diagonal terms in $X_d$ correspond to the predictions of the downstream states, so that this is precisely the equation that governs the prediction or simulation component prescribed in Fig. \ref{fig:tf1}. Simplifying using $X=X_d+X_{uo}$ we get
\begin{equation}\label{eq:dynamics1}
\begin{split}
\dot{X}_d=AX_d(t)+BU_d(t)+R(t)\\
R(t)=\Pi_d(AX_{uo}(t)+BU_{uo}(t)).
\end{split}
\end{equation}
We think of $R(t)$ as the influence of the upstream components (and also the unrelated components) in predicting $X_d$. The dynamics \eqref{eq:dynamics1} correspond to the closed-loop dynamics. 
\begin{remark}
Equation \eqref{eq:dynamics1} along with \eqref{eq:control_law} formally specifies the controller. The controller states correspond to the off-diagonal entries of $X_d$ (i.e. the free variables of $X$). The number of states is equal to the number of intervals in the poset. 
\end{remark}

To check the consistency of \eqref{eq:local_dyn} we note that from \eqref{eq:X_{uo}1} we have that $\dot{X}_{uo}=\Pi_{uo}(\dot{\mu(X_d)}\zeta^T)$, whereas on the other hand from \eqref{eq:local_dyn} $\dot{X}_{uo}=\Pi_{uo}(AX+BU)$. It is sufficient to check that these two expressions are the same. To do so note the following chain of equalities:
\begin{align*}
\dot{X}_{uo}&=\Pi_{uo}(\dot{\mu(X_d)}\zeta^T) \\
&=\Pi_{uo}(\mu \left(AX_d+BU_d+R \right)\zeta^T) \qquad &\text{(from \eqref{eq:dynamics1})} \\
&=\Pi_{uo}( (A\mu(X_d)+B\mu(U_d))\zeta^T) \qquad &(\text{since }\mu(R)=0) \\
&=\Pi_{uo}( AX+BU) \qquad &(\text{using \eqref{eq:X_{uo}1}, \eqref{eq:U_{uo}}}). 
\end{align*}
\subsection{Separation Principle}
As a consequence of Lemma \ref{lemma:properties}, we see that $\mu(X)=\mu(X_d)$ and also $\mu(R)=A\mu(X_{uo})+B\mu(U_{uo})=0$.
Applying $\mu$ to \eqref{eq:dynamics1} we obtain the following \emph{modified closed-loop dynamics} in the new variables $\mu(X)$:
\begin{equation}\label{eq:dynamics2}
\dot{\mu(X)}(t)=A\mu(X)(t)+B\mu(U)(t).
\end{equation} 
Let us define 
$\mathbf{A}+\mathbf{B}\mathbf{F}=\left\{A+B\hat{F}(1), \ldots, A+B\hat{F}(s) \right\}.$
From \eqref{eq:control_law}, and the fact that $\mu(\zeta(\mathbf{F} \circ \mu(X)))=\mathbf{F} \circ \mu(X)$ we will momentarily see that the modified closed-loop dynamics are:
 \begin{equation} \label{eq:sep}
 \dot{\mu(X)}(t)=(\mathbf{A}+\mathbf{B}\mathbf{F}) \circ \mu(X)(t).
 \end{equation}
These dynamics describe how the differential improvements in the state evolve. If one picks $U$ such that $\mu(U)$ stabilizes $\mu(X)$, the differential improvements are all stabilized. Thus $\mu(X)$ converges to zero, the state predictions become accurate asymptotically and the closed-loop is also stabilized. We show that \eqref{eq:control_law} achieves this with an appropriate choice of local gains.
\begin{theorem} \label{theorem:separation_principle}
Let $F(i)$ be chosen such that $A(\down i, \down i)+ B(\down i, \down i)F(i)$ is stable for all $i \in P$. Then the control law \eqref{eq:control_law} with local gains $F(i)$ renders \eqref{eq:dynamics2} stable.
\end{theorem}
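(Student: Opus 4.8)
The plan is to first reduce the closed-loop dynamics \eqref{eq:dynamics2} to the fully decoupled form \eqref{eq:sep}, and then analyze that decoupled system one column at a time. For the reduction I would substitute the control law \eqref{eq:control_law} into \eqref{eq:dynamics2}. Since $\mu(U)=\mu(U_d)$ by part 2 of Lemma \ref{lemma:properties}, and since $\mathbf{F}\circ\mu(X)\in\inc$ (because $\mu(X)\in\inc$), part 1 of the same lemma gives $\mu(U_d)=\mu(\zeta(\mathbf{F}\circ\mu(X)))=\mathbf{F}\circ\mu(X)$. Feeding $\mu(U)=\mathbf{F}\circ\mu(X)$ back into \eqref{eq:dynamics2} and reading off the $i^{th}$ column via Definition \ref{def:product} yields
\begin{equation*}
\dot{\mu(X)}^i(t) = A\mu(X)^i(t) + B\hat{F}(i)\mu(X)^i(t) = (A + B\hat{F}(i))\mu(X)^i(t),
\end{equation*}
which is exactly \eqref{eq:sep}. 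The structural payoff is that the columns of $\mu(X)$ now evolve \emph{independently}, so stability of \eqref{eq:dynamics2} is equivalent to stability of each of the $s$ scalar-in-$i$ column systems separately.

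The heart of the argument is to show, for each $i$, that the column system $\dot v=(A+B\hat{F}(i))v$ is stable along the trajectories that actually occur. Here I would exploit that $\mu(X)\in\inc$, so that the $i^{th}$ column $\mu(X)^i$ is supported on $\down i$, i.e. it lives in the coordinate subspace $V_i=\{v : v_j=0 \text{ for } j\notin \down i\}$. The key claim is that $V_i$ is invariant under $A+B\hat{F}(i)$. This is where poset-causality enters decisively: because the plant is poset-causal, $A,B\in\inc$, hence both are lower triangular with respect to the fixed linear extension, and a short transitivity argument (if $v$ is supported on $\down i$ and $A_{ab}\neq 0$ forces $b\preceq a$, then $i\preceq b\preceq a$ keeps the support inside $\down i$) shows $A V_i\subseteq V_i$ and $B V_i\subseteq V_i$. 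Likewise $\hat{F}(i)$ is supported on $\down i\times\down i$ by construction, so $\hat{F}(i)V_i\subseteq V_i$ and therefore $B\hat{F}(i)V_i\subseteq V_i$. Thus $V_i$ is invariant, and since the trajectory of $\mu(X)^i$ starts and stays in $V_i$, only the restriction of $A+B\hat{F}(i)$ to $V_i$ governs its decay.

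Finally I would identify that restriction explicitly. Under the identification $V_i\cong\mathbb{R}^{|\down i|}$, the restriction of $A+B\hat{F}(i)$ to $V_i$ is the principal submatrix indexed by $\down i$. A direct block computation, using that $\hat{F}(i)$ vanishes outside $\down i\times\down i$, collapses this to
\begin{equation*}
(A + B\hat{F}(i))(\down i, \down i) = A(\down i, \down i) + B(\down i, \down i)F(i),
\end{equation*}
which is stable by hypothesis. Hence each $\mu(X)^i(t)\to 0$, so $\mu(X)(t)\to 0$ and \eqref{eq:dynamics2} is stable.

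I expect the main obstacle to be the invariance claim together with the submatrix identity: one must verify carefully that the interplay between the incidence-algebra (lower-triangular) structure of $A$ and $B$ and the $\down i\times\down i$ support of $\hat{F}(i)$ both preserves $V_i$ and collapses the restricted operator \emph{exactly} onto the stable block $A(\down i,\down i)+B(\down i,\down i)F(i)$. The column decoupling and the passage to \eqref{eq:sep} are essentially bookkeeping built on Lemma \ref{lemma:properties}, but this structural step is the place where the poset hypotheses must be used precisely.
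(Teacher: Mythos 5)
Your proposal is correct and follows essentially the same route as the paper: substitute the control law, use Lemma \ref{lemma:properties} to get $\mu(U)=\mathbf{F}\circ\mu(X)$, read off the decoupled column dynamics $\dot{\mu(X)}^i=(A+B\hat{F}(i))\mu(X)^i$, and conclude stability from the support of $\mu(X)^i$ on $\down i$. The only difference is that you spell out the invariance of the coordinate subspace $V_i$ and the collapse of the restricted operator to $A(\down i,\down i)+B(\down i,\down i)F(i)$, a step the paper dismisses as ``easy to see''; your elaboration is accurate.
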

\begin{proof}
Since $U_d=\zeta(\mathbf{F}\circ \mu(X))$ it follows that 
\begin{align*}
\mu(U_d)=\mu(U)=\mu \left( \zeta(\mathbf{F}\circ \mu(X)) \right) =\mathbf{F}\circ \mu(X). 
\end{align*}
The last equality follows from Lemma \ref{lemma:properties} and the fact that $\mathbf{F} \circ \mu(X) \in \inc$. As a consequence, $\mu(U)^i=\hat{F}(i) \mu(X)^i$ for all $i \in P$.
Hence the closed-loop dynamics \eqref{eq:dynamics2} become:
$$
\dot{\mu(X)}^i(t)=\left(A+B\hat{F}(i)\right)\mu(X)^i(t).
$$


Recalling that $\mu(X)$ is a local variable so that $\mu(X)^i$ (viewed as a vector) is non-zero only on $\down i$ it is easy to see that
these dynamics are stabilized exactly when $F(i)$ are picked such that  $A(\down i, \down i)+ B(\down i, \down i)F(i)$ are stable.

 \end{proof}

The dynamics of the different subsystems $\mu(X)^i$ are \emph{decoupled}, so that the gains ${G}(i)$ may be picked independent of each other. This may be viewed as a \emph{separation principle}. Henceforth, we will assume that the gains $F(i)$ have been picked in this manner.
Since the closed loop dynamics of the states $x_{i}(j)$ are related by an invertible transformation (i.e. $X_d=\zeta(\mu(X_d))$), if the modified closed-loop dynamics \eqref{eq:sep} are stable, so are the closed-loop dynamics \eqref{eq:dynamics1}.
\subsection{Controller Realization}
 We now describe two explicit controller realizations. The natural controller realization arises from the closed-loop dynamics \eqref{eq:dynamics1} along with the control law \eqref{eq:control_law} to give:
 \begin{align*}
 \dot{\mu(X)}(t)&=AX_d(t)+BU_d(t)+R(t) \\
 U_d(t)&=\zeta(\mathbf{F}\circ \mu(X_d))(t).
 \end{align*}

While the above corresponds to a natural description of the controller, it is possible to specify an alternate realization. This is motivated from the following observation. The control input $U$ depends only on $\mu(X)$. Hence, rather than implementing controller states that track the state predictions $X$, it is natural to implement controller states that compute $\mu(X)$ directly. Hence an equivalent realization of the controller is:
\begin{equation} \label{eq:controller}
\begin{split}
\dot{\mu(X)}(t)&=A\mu(X)(t)+B\mu(U)(t)\\
 U_d(t)&=\zeta(\mathbf{F}\circ \mu(X))(t).
\end{split}
\end{equation}

\subsection{Structure of the Optimal Controller}

Consider again the poset-causal system considered in \eqref{eq:1}. 
Recall that the system \eqref{eq:2} may be viewed as a map from the inputs $w, u$ to outputs $z,x$ via
\begin{align*}
z&=P_{11}w+P_{12}u \\
x&=P_{21}w+P_{22}u
\end{align*}
where \small
\begin{equation} \label{eq:1}
\ba{rl}
\ll
\ba{cc} P_{11} & P_{12} \\
P_{21} & P_{22}
\ea \rr 
=
\ll \ba{c|cc} 
A&I&B \\ \hline C & 0 & D \\ I & 0 & 0
\ea \rr.
\ea
\end{equation}
\normalsize
(We refer the reader to \cite{Zhou} as a reminder of standard LFT notation used above).
In this paper we will assume that $A \in \mc{I}(\mc{P})$ and   $B \in \mc{I}(\mc{P})$. Indeed, this assumption ensures that the plant $P_{22}(z)=(zI-A)^{-1}B \in \mc{I}(\mc{P})$. 

Consider the optimal control problem:
\begin{equation} \label{eq:4}
\begin{aligned}
& \underset{K}{\text{minimize}} & &  \|P_{11}+P_{12}K(I-P_{22}K)^{-1}P_{21} \|^{2}  \\
& \text{subject to} & & K \text{ \ stabilizes \ } P, \; K \in \mc{I}(\mc{P}).
\end{aligned}
\end{equation}
The solution $K^{*}$ is the $\mc{H}_2$-optimal controller that obeys the poset-causality information constraints described in Section~\ref{sec:2}. The solution to this optimization problem was presented in \cite[Theorem 3]{poset_h2_journal}. The main idea behind the solution procedure is as follows. Using the fact that $P_{21}, P_{22} \in \inc$ are square and invertible (due to the availability of state feedback) it is possible to reparametrize the above problem via $Q=K(I-P_{22}K)^{-1}P_{21}$. Indeed, this relationship is invertible and the incidence algebra structure ensures that $Q \in \inc$ if and only if $K \in \inc$. Using this the above optimization problem may be rewritten as:
\begin{equation} 
\begin{aligned}
& \underset{Q}{\text{minimize}} & &  \|P_{11}+P_{12}Q \|^{2}  \\
& \text{subject to} & &  Q \in \mc{I}(\mc{P}).
\end{aligned}
\end{equation}
Using the fact that the $\mc{H}_2$ norm is column-separable, it is possible to decouple this optimization problem into a set of $s$ optimization problems. Each optimization problem involves the solution to a standard Riccati equation. The solution to each yields the columns of $Q^{*} \in \inc$, from which the optimal controller $K^{*} \in \inc$ may be recovered. An explicit formula for the optimal controller and other details may be found in \cite{pari_thesis,poset_h2}.

In \cite{poset_h2_journal}, we obtain matrices $K(\down j, \down j)$ by solving a system of decoupled Riccati equations via $(K(\down j, \down j), Q(j), P(j))=\text{Ric}(A(\down j, \down j), B(\down j, \down j), C(\down j), D(\down j))$ (we use slightly different notation and reversed conventions in that paper, see \cite{poset_h2_journal} for details). The optimal solution $K^*$ to \eqref{eq:4} is related to the proposed architecture as described below in Theorem \ref{theorem:struct}. Its proof is provided in the appendix.

 \begin{theorem} \label{theorem:struct}
 The controller \eqref{eq:controller} with gains $F(i)=K(\down i, \down i)$ for all $i \in P$ is the optimal solution to the control problem \eqref{eq:4}.
 \end{theorem}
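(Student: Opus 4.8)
The plan is to establish optimality by showing that the closed-loop transfer function from the disturbance $w$ to the input $u$ induced by the architecture \eqref{eq:controller} agrees \emph{column by column} with the optimal Youla parameter $Q^*$ of the reformulated problem $\min_{Q\in\inc}\|P_{11}+P_{12}Q\|^2$. Since the correspondence $K\mapsto Q=K(I-P_{22}K)^{-1}P_{21}$ is a bijection on $\inc$ (as recalled around \eqref{eq:4}), matching $Q$ forces the architecture controller to equal $K^*$; together with the stability guaranteed by Theorem \ref{theorem:separation_principle} (each $A(\down i,\down i)+B(\down i,\down i)F(i)$ being the stable Riccati closed loop), this yields optimality.

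First I would interconnect \eqref{eq:controller} with the plant \eqref{eq:2} and express the closed loop in the coordinates $\mu(X)$. By the computation in the proof of Theorem \ref{theorem:separation_principle} the input satisfies $\mu(U)=\mathbf{F}\circ\mu(X)$, so the homogeneous dynamics collapse to the decoupled form \eqref{eq:sep}, namely $\dot{\mu(X)}^i=(A+B\hat F(i))\mu(X)^i$. The crucial point is to track the disturbance. Since $w$ enters the plant diagonally in $\dot{x}=Ax+Bu+w$, it appears in the local-variable dynamics only on the diagonal as $\mathrm{diag}(w)$, and a short calculation with Lemma \ref{lemma:mu1} shows that the M\"obius operator leaves a diagonal matrix invariant, $\mu(\mathrm{diag}(w))=\mathrm{diag}(w)$. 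Hence the closed loop in the new coordinates is $\dot{\mu(X)}=(\mathbf{A}+\mathbf{B}\mathbf{F})\circ\mu(X)+\mathrm{diag}(w)$, i.e. column by column $\dot{\mu(X)}^i=(A+B\hat F(i))\mu(X)^i+w_i e_i$, so that $w_j$ drives \emph{only} column $j$ and enters at the minimal element $j$ of $\down j$.

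Next I would read off the input through the control law. Writing $u_i=(U_d)^i_i=\sum_{k\preceq i}(\hat F(k)\mu(X)^k)_i$ and using that column $k$ of $\mu(X)$ depends only on $w_k$, the response of $u_i$ to $w_j$ comes entirely from the $k=j$ term and equals $(F(j))_{i,\cdot}\bigl(sI-(A(\down j,\down j)+B(\down j,\down j)F(j))\bigr)^{-1}\beta_j$, where $\beta_j$ is the indicator of $j$ inside $\down j$ and $(F(j))_{i,\cdot}$ is the row of $F(j)$ indexed by $i$. With $F(j)=K(\down j,\down j)$ this is exactly the state-feedback $\mathcal{H}_2$-optimal closed-loop map $w_j\mapsto u$ for the restricted subsystem $(A(\down j,\down j),B(\down j,\down j),C(\down j),D(\down j))$ driven at its source $j$. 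On the other hand, the explicit Riccati description of $Q^*$ from \cite{poset_h2_journal} identifies its $j$-th column with precisely this map, the column-$j$ subproblem of the separable minimization being the very state-feedback problem on $\down j$. Thus the two Youla parameters coincide for all $i,j$, completing the argument.

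The main obstacle is the second step: rigorously deriving the closed-loop form $\dot{\mu(X)}=(\mathbf{A}+\mathbf{B}\mathbf{F})\circ\mu(X)+\mathrm{diag}(w)$, because \eqref{eq:controller} is written autonomously and the interaction with the measurement $y=x$ and the disturbance $w$ is implicit. I expect this to require the consistency argument used after \eqref{eq:dynamics1}---verifying that the simulated prediction dynamics agree with the true plant dynamics on the measured (upstream) entries---now carried out in the presence of $w$, together with careful bookkeeping of which entries of $X$ are measured (hence disturbed) versus predicted (hence disturbance-free), and the observation that $\mu$ fixes diagonal signals. Once this column-decoupled closed loop with diagonal disturbance injection is in place, the identification with the Riccati solution of \eqref{eq:4} is routine, given the column-separability already exploited there.
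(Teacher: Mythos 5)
Your proposal is sound, but it takes a genuinely different route from the paper. The paper's proof (in the appendix) is a direct realization-level match: it writes the architecture \eqref{eq:controller} with $F(i)=K(\down i,\down i)$ explicitly in the controller states $q(j)=\mu(X)^j_{\down\down j}$, vectorizes to $\dot q = A_\Phi q + B_\Phi(x-C_\Phi q)$, $u = C_Q(\Pi_2-\Pi_1 C_\Phi)q + C_Q\Pi_1 x$, and observes that this is literally the same state-space formula as the optimal controller of \cite[Theorem 3]{poset_h2_journal}; optimality is then inherited wholesale. You instead match the closed-loop Youla parameter $Q=K(I-P_{22}K)^{-1}P_{21}$ column by column against the column-separable Riccati characterization of $Q^*$, exploiting the fact that in $\mu(X)$ coordinates the disturbance decouples so that $w_j$ drives only column $j$. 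Your route is more conceptual and arguably more self-contained: it needs only the decoupled-Riccati description of $Q^*$ (not the explicit controller formula and its attendant notation $A_\Phi,B_\Phi,C_\Phi,\Pi_1,\Pi_2,C_Q$), it connects naturally to the Youla/disturbance-feedforward picture of Section~\ref{sec:6}, and since $K\mapsto Q$ is a bijection on $\inc$, matching $Q^*$ does pin down $K^*$ as a transfer function. The paper's route buys a stronger, exact identification of realizations (the controller states coincide, not just the input-output map) at the cost of importing the prior work's formula verbatim. One small correction to your step two: the disturbance does not enter the local-variable dynamics only on the diagonal --- every \emph{measured} entry $X^i_j=x_j$ with $j\preceq i$ is disturbed, so $\dot X = AX+BU+W\zeta^T$ with $W=\diag(w)$ (exactly as in the discrete-time equation \eqref{eq:disc_local}); the diagonal form you want appears only after applying $\mu$, since $\mu(W\zeta^T)=\Pi_d(W\zeta^T\mu^T)=W$. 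This is the same computation the paper performs in the lemma of Section~\ref{sec:6}, and with it your column-decoupled closed loop $\dot{\mu(X)}^j=(A+B\hat F(j))\mu(X)^j+w_j e_j$ follows; the rest of your identification with the column-$j$ state-feedback subproblem on $\down j$ is correct.
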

Theorem \ref{theorem:struct} establishes that the controller architecture proposed in this paper is also \emph{optimal} in the sense of the $\mc{H}_2$ norm.

\section{A Block-Diagram Interpretation}\label{sec:5}
We now interpret the separation principle described in Theorem \ref{theorem:separation_principle} from a block-diagram perspective.
We introduce some additional notation to be used in this section. If $Y$ is a matrix, we denote by $\vec(Y)$ to be its standard vectorization via column concatenation. Given matrices $M$ and $N$, $M \kron N$ denotes their Kronecker product, and we recall the standard identity:
$$
\vec(NXM^T)=(M \kron N) \vec(X).
$$

Recall the definition of $\Pi_d$ in Definition \ref{def:matrix_mult} as being the projection of a matrix $X$ onto the incidence algebra. We will abuse notation and also define $\Pi_d$ to be the linear operator that acts on the vectorization $\vec(X)$ and zeroes components whose indices do not belong to the incidence algebra (the usage will be clear from the context).

The elementary blocks that appear in our block-diagram representation are the following:
\begin{itemize}
\item The plant $G$, which maps the inputs $u$ to the states $x$,
\item The transfer functions which play the role of predicting the local state variables $X^i$ from the states $x_j$ and inputs $u_j$ for $j \in \down i$ via \eqref{eq:dynamics1}. We call all these transfer functions collectively the ``simulator'', because their role may be interpreted as that of simulating upstream states,
\item The map $\bar{\mu}$ which takes as input $\vec(X)$ and computes $\vec(\mu(X))$,
\item The local gains $F(1), \ldots F(s)$,
\item The map $\bar{\zeta}$ which takes as input $\vec(\mu(U))$ and computes $\vec(U_d)$.
\end{itemize}
In the closed-loop system all these transfer functions are interconnected as shown in Fig. \ref{fig:tf1}.

We define the opertators $\bar{\zeta}$ and $\bar{\mu}$ as follows:
\begin{align} \label{eq:mu_vec}
\bar{\zeta}:=\Pi_d(\zeta \kron I) \qquad \bar{\mu}:=\Pi_d(\mu \kron I).
\end{align} 
Since $X_d=\zeta(\mu(U))$, note that $\vec(U_d)=\bar{\zeta} \vec(\mu(U))$ (this clarifies the role of $\bar{\zeta}$ inFig. \ref{fig:tf1}). Similarly, $\vec(\mu(X))=\bar{\mu}\vec(X)$. 
As one might expect, $\bar{\zeta}$ and $\bar{\mu}$ are invertible restricted to $\inc$.
\begin{lemma}\label{lemma:inverses_vec}
For $\bar{\zeta}$ and $\bar{\mu}$ as defined in \eqref{eq:mu_vec}
$$
\bar{\zeta}\bar{\mu}=\bar{\mu}\bar{\zeta}=\Pi_d.
$$
\end{lemma}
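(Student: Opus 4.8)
The plan is to prove $\bar{\zeta}\bar{\mu} = \bar{\mu}\bar{\zeta} = \Pi_d$ by reducing the claim about the vectorized operators $\bar{\zeta}, \bar{\mu}$ to the already-established inverse relationship between the matrix operators $\zeta, \mu$ on the incidence algebra (Lemma \ref{lemma:properties}, part 1). The key observation is that $\bar{\zeta}$ and $\bar{\mu}$ are defined via Kronecker products with the identity, $\bar{\zeta} = \Pi_d(\zeta \kron I)$ and $\bar{\mu} = \Pi_d(\mu \kron I)$, and since $\mu = \zeta^{-1}$ at the matrix level, we have the clean algebraic identity $(\zeta \kron I)(\mu \kron I) = (\zeta\mu) \kron I = I \kron I = I$. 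The only subtlety is the interplay of these Kronecker products with the projection $\Pi_d$.

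First I would make precise the vectorized interpretation. Recalling $\vec(NXM^T) = (M \kron N)\vec(X)$, applying $\zeta \kron I$ to $\vec(X)$ computes $\vec(IX\zeta^T) = \vec(X\zeta^T)$, and composing with $\Pi_d$ yields $\vec(\Pi_d(X\zeta^T)) = \vec(\zeta(X))$ by Definition \ref{def:mobius_op}; similarly $\bar{\mu}\vec(X) = \vec(\mu(X))$. Thus $\bar{\zeta}$ and $\bar{\mu}$ are nothing but the vectorized forms of the operators $\zeta(\cdot)$ and $\mu(\cdot)$ from Definition \ref{def:mobius_op}. Consequently, for any matrix $X$,
$$
\bar{\mu}\,\bar{\zeta}\,\vec(X) = \vec\big(\mu(\zeta(X))\big), \qquad \bar{\zeta}\,\bar{\mu}\,\vec(X) = \vec\big(\zeta(\mu(X))\big).
$$
By Lemma \ref{lemma:properties} (part 1), for $X \in \inc$ we have $\mu(\zeta(X)) = \zeta(\mu(X)) = X$, and for general $X$ the remark following that lemma gives $\zeta(\mu(X)) = \Pi_d(X)$ (and likewise $\mu(\zeta(X)) = \Pi_d(X)$, since by part 2 both operators depend only on $\Pi_d(X)$). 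Vectorizing, this is exactly $\bar{\zeta}\bar{\mu}\vec(X) = \vec(\Pi_d(X)) = \Pi_d\vec(X)$ under the abuse of notation established for $\Pi_d$ acting on vectorizations, and symmetrically for $\bar{\mu}\bar{\zeta}$.

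The step I expect to require the most care is verifying that composing the two operators, each of which carries its own internal $\Pi_d$ projection, does not lose information relative to the clean matrix identity $\mu(\zeta(X)) = \Pi_d(X)$. The concern is that $\bar{\mu}\bar{\zeta} = \Pi_d(\mu \kron I)\Pi_d(\zeta \kron I)$ contains an intermediate projection $\Pi_d$ sandwiched between the two Kronecker factors, which is not literally the same as $\Pi_d(\mu\zeta \kron I)$. However, this is precisely the content already handled in the matrix setting: the intermediate $\Pi_d$ corresponds to the output of $\zeta(X)$ lying in $\inc$, and Lemma \ref{lemma:properties} (part 2) guarantees $\mu(X) = \mu(\Pi_d(X))$, so the operator $\mu(\cdot)$ ignores any off-$\inc$ components anyway. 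Hence inserting $\Pi_d$ between the applications is harmless, and the composition of vectorized operators faithfully reproduces the composition $X \mapsto \zeta(X) \mapsto \mu(\zeta(X))$ of matrix operators. Once this identification is justified, the result follows immediately from the fundamental-theorem-of-calculus property in Lemma \ref{lemma:properties}.
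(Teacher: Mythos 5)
Your proof is correct and follows exactly the paper's route: the paper's own proof is the one-line remark that the lemma ``follows simply by vectorizing the first and second parts of Lemma \ref{lemma:properties},'' and your argument is precisely that vectorization carried out in detail, including the (correct) observation that the intermediate $\Pi_d$ is harmless because $\mu(\cdot)$ and $\zeta(\cdot)$ depend only on $\Pi_d(X)$.
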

\begin{proof}
This follows simply by vectorizing the first and second parts of Lemma \ref{lemma:properties}.
\end{proof}

Figure \ref{fig:tf1} explains the closed-loop at block diagram level. The role of the simulator is to form predictions of downstream states from the available states and inputs. These predictions $\vec(X)$ are then processed by $\bar{\mu}$, to produce $\vec(\mu(X))$. These are then composed with the local gains to produce $\vec(\mu(U))$. Finally, $\bar{\zeta}$ acts on $\vec(\mu(U))$ to produce $\vec(U_d)$. Internally within the simulator one can map the downstream components $U_d$ to the local input $U$ (using \eqref{eq:U_{uo}}) via
$$
\Theta=(\zeta \kron I)\Pi_d(\mu \kron I).
$$ 
It may be easily verified using \eqref{eq:U_{uo}} that $\Theta \vec(U_d)=\vec(U)$. The overall composition of the plant $G$ and the simulator can be combined to give a transfer function which we denote by $G_{\vec}$ as shown in Fig \ref{fig:tf2}. It has an explicit formula given by:
\begin{align} \label{eq:Gvec}
G_{\vec}=(I \kron G)(\zeta \kron I)\Pi_d(\mu \kron I).
\end{align}
Thus $G_\vec$ is the overall tranfer function that maps $\vec(U_d)$ (the true and predicted downstream inputs) to $\vec(X)$ (the local states). We illustrate this with an example. \vspace{-7pt}
\begin{example}
For the poset in Fig. \ref{fig:2.1}(a), we have \small
\begin{align*}
\vec(X_d)=\ll \ba{c}x_1 \\ x_2(1) \\ 0 \\ x_2 \ea \rr & \qquad
\vec(U_d)=\ll \ba{c}u_1 \\ u_2(1) \\ 0 \\  u_2 \ea \rr.
\end{align*} \normalsize
Furthermore, the map $G$ is given by 
$
G=\ll \ba{cc}
G_{11} & 0 \\ G_{21} & G_{22}
\ea \rr, 
$
and the map $G_{\vec}$ as defined in \eqref{eq:Gvec} is given by: \small
$$
G_{\vec}=\ll \ba{cccc} 
G_{11} & 0 & 0 & 0\\
G_{21} & G_{22} & 0 \\
0 & 0 & 0 & 0 \\
G_{21} & 0 & 0 & G_{22}
\ea \rr.
$$ \normalsize
For this poset the matrices $\bar{\zeta}$ and $\bar{\mu}$ are given by: \small
\begin{align*}
\bar{\zeta}= \ll \ba{cccc} I & 0 & 0 & 0 \\ 0 & I &0 & 0\\ 0 & 0 & 0 & 0\\ 0 & I & 0 & I   \ea \rr & \qquad
\bar{\mu}= \ll \ba{cccc} I & 0 & 0 & 0 \\ 0 & I &0 & 0\\ 0 & 0 & 0 & 0\\ 0 & -I & 0 & I \ea \rr. 
\end{align*} \normalsize
It is straightforward to verify that $\vec(X)=G_\vec \vec(U)$. The following important identity may be verified for this example:
$$
\bar{\mu}G_{\vec}\bar{\zeta}=\ll \ba{cccc} 
G_{11} & 0 & 0 & 0 \\
G_{21} & G_{22} & 0 & 0 \\
0 & 0 & 0 & 0 \\
0 & 0 & 0 & G_{22}
\ea \rr,
$$
which is a block diagonal matrix.
\end{example}
As indicated in Fig. \ref{fig:tf1}, the collective map from $\vec(U_d)$ to $\vec(X)$ (which collects the plant $G$, $\Theta$, and the simulation block into a single transfer function) is simply given by $G_{\vec}$. Thus the block-diagram in Fig. \ref{fig:tf1} can be simplified to Fig. \ref{fig:tf2}. 
\begin{figure}[htbp]
  \begin{center}
  \includegraphics[scale=0.4]{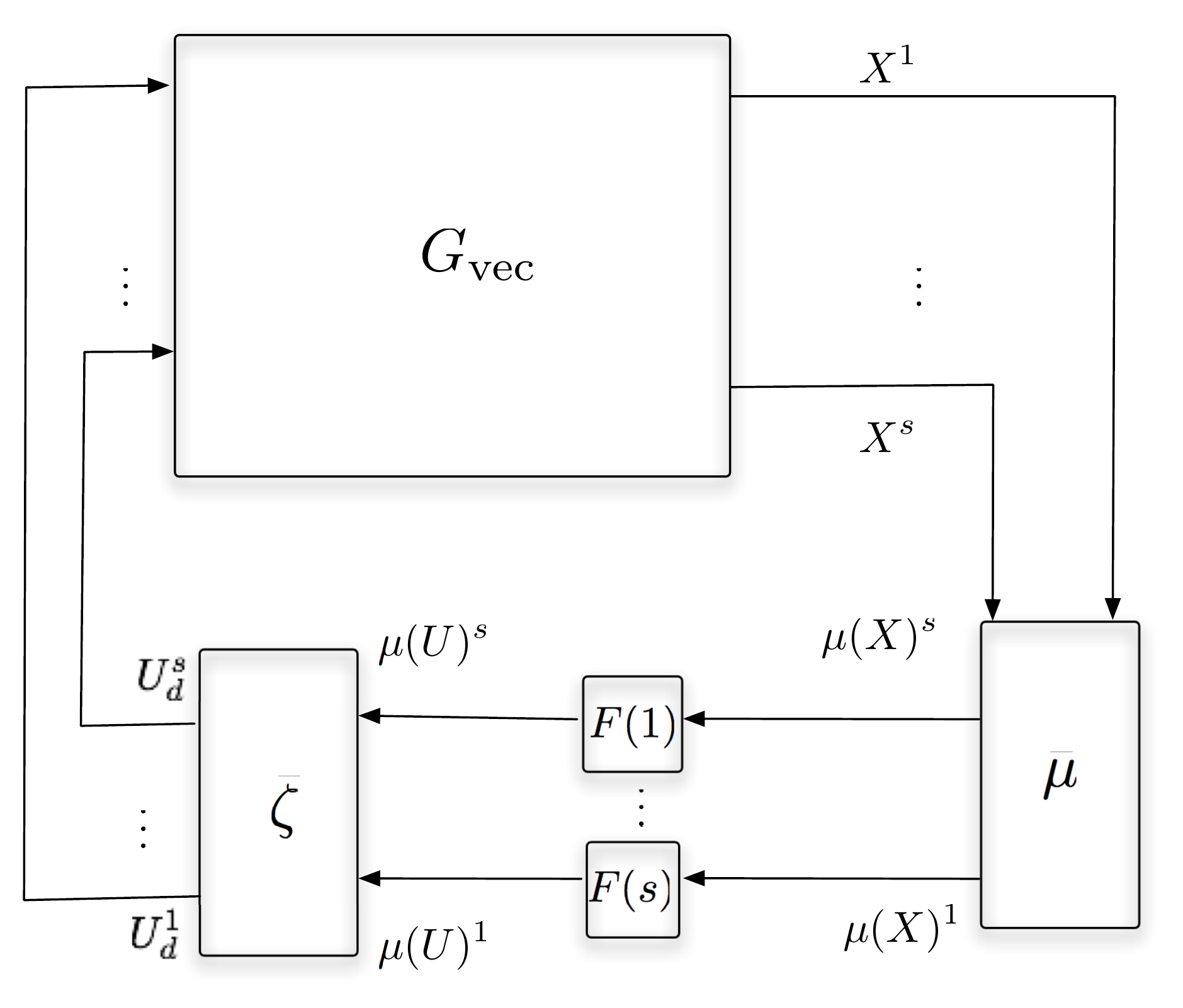}
  \end{center}
  \caption{A simplified block-diagram representation of the control architecture.}
  \label{fig:tf2}
  \end{figure}
The matrix $G_{\vec}$ satisfies the following: 
\begin{theorem}
The matrix
$\bar{\mu}G_{\vec} \bar{\zeta}$ 
is block diagonal.
\end{theorem}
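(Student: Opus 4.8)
The plan is to collapse the triple product $\bar{\mu} G_{\vec} \bar{\zeta}$ into a single, transparently block-diagonal expression, using the Kronecker mixed-product identity $(A \kron B)(C \kron D) = AC \kron BD$ together with Lemma \ref{lemma:inverses_vec}. Substituting the definitions \eqref{eq:mu_vec} and \eqref{eq:Gvec}, I would first handle the right end: the factor $\Pi_d(\mu \kron I)$ sitting at the tail of $G_{\vec}$ meets $\bar{\zeta} = \Pi_d(\zeta \kron I)$, and by Lemma \ref{lemma:inverses_vec} their product is exactly $\bar{\mu}\bar{\zeta} = \Pi_d$. This leaves $\bar{\mu} G_{\vec}\bar{\zeta} = \Pi_d(\mu \kron I)(I \kron G)(\zeta \kron I)\Pi_d$.

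Next I would simplify the interior Kronecker string. By the mixed-product rule, $(\mu \kron I)(I \kron G)(\zeta \kron I) = (\mu \zeta) \kron G = I \kron G$, since $\mu = \zeta^{-1}$. Hence the whole expression reduces to
\[
\bar{\mu} G_{\vec} \bar{\zeta} = \Pi_d (I \kron G) \Pi_d .
\]
This is the crux: conjugating the interconnection by the M\"obius/zeta pair strips away the off-diagonal coupling introduced by $\zeta$ and $\mu$, leaving only the ``plant copy'' $I \kron G$ sandwiched between two coordinate projections. It is the vectorized restatement of the separation principle of Theorem \ref{theorem:separation_principle}, in which the column $\mu(X)^i$ depends only on $\mu(U)^i$.

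It then remains to observe that $\Pi_d(I \kron G)\Pi_d$ is block diagonal in the natural column blocking of the vectorization (the $i$-th block being the coordinates of the $i$-th column of a local variable). The operator $I \kron G$ acts block-diagonally, since $(I \kron G)\vec(U) = \vec(GU)$ sends the $i$-th column block $U^i$ to $GU^i$ and mixes no two column blocks. The operator $\Pi_d$, by its definition as the coordinatewise zeroing of indices outside $\inc$, is a $0/1$ diagonal matrix, hence block diagonal with respect to every blocking. A product of matrices that are block diagonal with respect to a common blocking is again block diagonal, which yields the claim; concretely the $i$-th diagonal block equals $P_i G P_i$, where $P_i$ is the diagonal projection retaining the coordinates of $\down i$ (one checks this against the worked example, where $P_1 G P_1 = G$ and $P_2 G P_2$ retains only $G_{22}$).

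The steps are essentially mechanical, so the only real care needed is bookkeeping: one must track the reversed lower-triangular index convention for $\inc$ so that $\Pi_d$ acts on the correct coordinates, and one must verify that the two interior $\Pi_d$ factors genuinely collapse through Lemma \ref{lemma:inverses_vec} rather than leaving a stray projection that would obstruct the Kronecker simplification. The main point is conceptual rather than an obstacle: recognizing the clean reduction to $\Pi_d(I \kron G)\Pi_d$, after which block diagonality is immediate.
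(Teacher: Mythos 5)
Your proof is correct and follows essentially the same route as the paper's: both reduce $\bar{\mu}G_{\vec}\bar{\zeta}$ to $\Pi_d(I \kron G)\Pi_d$ using the Kronecker mixed-product identity together with Lemma~\ref{lemma:inverses_vec}, differing only in the (immaterial) order in which the cancellations are performed. Your closing identification of the $i$-th diagonal block as $P_i G P_i$ is a slightly more explicit justification of the final ``clearly block-diagonal'' step than the paper gives, and it checks out against the worked example.
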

\begin{proof}
Using \eqref{eq:Gvec}, we have \vspace{-15pt}
\begin{align*}
\bar{\mu}G_{\vec}\bar{\zeta}&=\bar{\mu}(I \kron G)(\zeta \kron I)\Pi_d(\mu \kron I)\bar{\zeta} \\
&=\Pi_d(\mu \kron I)(I \kron G)(\zeta \kron I)\Pi_d(\mu \kron I)\Pi_d(\zeta \kron I) \\
&=\Pi_d(\mu \kron I)(\zeta \kron I)(I \kron G)\Pi_d(\mu \kron I)\Pi_d(\zeta \kron I)  & \text{(using } (A \kron B) (C \kron D)=AC \kron BD) \\
&=\Pi_d (I \kron G)\Pi_d(\mu \kron I)\Pi_d(\zeta \kron I)  &\text{(using } (\mu \kron I) (\zeta \kron I)=I) \\
&=\Pi_d (I \kron G)\Pi_d & \text{(using Lemma \ref{lemma:inverses_vec})}.
\end{align*}
The last expression is clearly block-diagonal.
\end{proof}
In terms of this block-diagram approach, the role of $\bar{\mu}$ and $\bar{\zeta}$ become very transparent: it is simply to diagonalize the map $G_\vec$. Once this diagonalization occurs, the controller simply applies a set of diagonal gains to stabilize the closed-loop. This also illustrates the separation principle at the block-diagram level. As mentioned in the preceding discussion, the architecture illustrated in block-diagram Fig \ref{fig:tf2} is also optimal, in that appropriate choice of the gains $F(i)$ yield optimal controllers.
%

\section{Connections to the Youla parameterization} \label{sec:6}
The Youla parameterization (and the related work on purified output feedback \cite{purified1,Goulart}) is intimately related to M\"{o}bius inversion. We examine their relationship in this section. We begin with a brief review of the Youla parameterization. We will examine the relationship in a discrete-time setting as this will make the presentation much simpler. Consider the system:
\begin{equation} \label{eq:disc_time}
\begin{split}
x[t]&=Ax[t-1]+Bu[t-1]+w[t-1] \\
y[t]&=x[t].
\end{split}
\end{equation}
For simplicity, we will assume that $A$ is stable (this can be easily achieved by choosing a static $K$ which is diagonal by picking $K_{ii}$ such that $A_{ii}+B_{ii}K_{ii}$ is stable, without affecting the set of achievable closed-loop maps). The Youla parametrization \cite[pp. 221-231]{Zhou} exploits the observation that while the set of achievable closed-loop maps is \emph{linear-fractional} with respect to the controller variable, it is \emph{affine} in terms of the parameter $Q:=K(I-P_{22}K)^{-1}$.

In the case of state-feedback with a stable plant, the Youla parameterization reduces to a particularly simple form, which we now describe. At time $t$, using the information of the state $x[t-1]$, the controller implements a simulation $\hat{P}$ of the plant to compute a prediction of the state at time $t$ via:
\begin{align}
\hat{x}[t]=Ax[t-1]+Bu[t-1].
\end{align}
Note that the simulation does not have access to the disturbances $w$ and hence simply sets it to zero. It then uses the output of the plant $x[t]$ (state-feedback) and then computes the difference $x[t]-\hat{x}[t]=w[t-1]$. It then processes $w[t-1]$ using an arbitrary causal filter $Q$ and sets the input $u=Qw$.
This can be summarized using a block diagram as shown in Fig. \ref{fig:youla1}(a).

\begin{figure}[htbp]
  \begin{center}
  \includegraphics[scale=0.35]{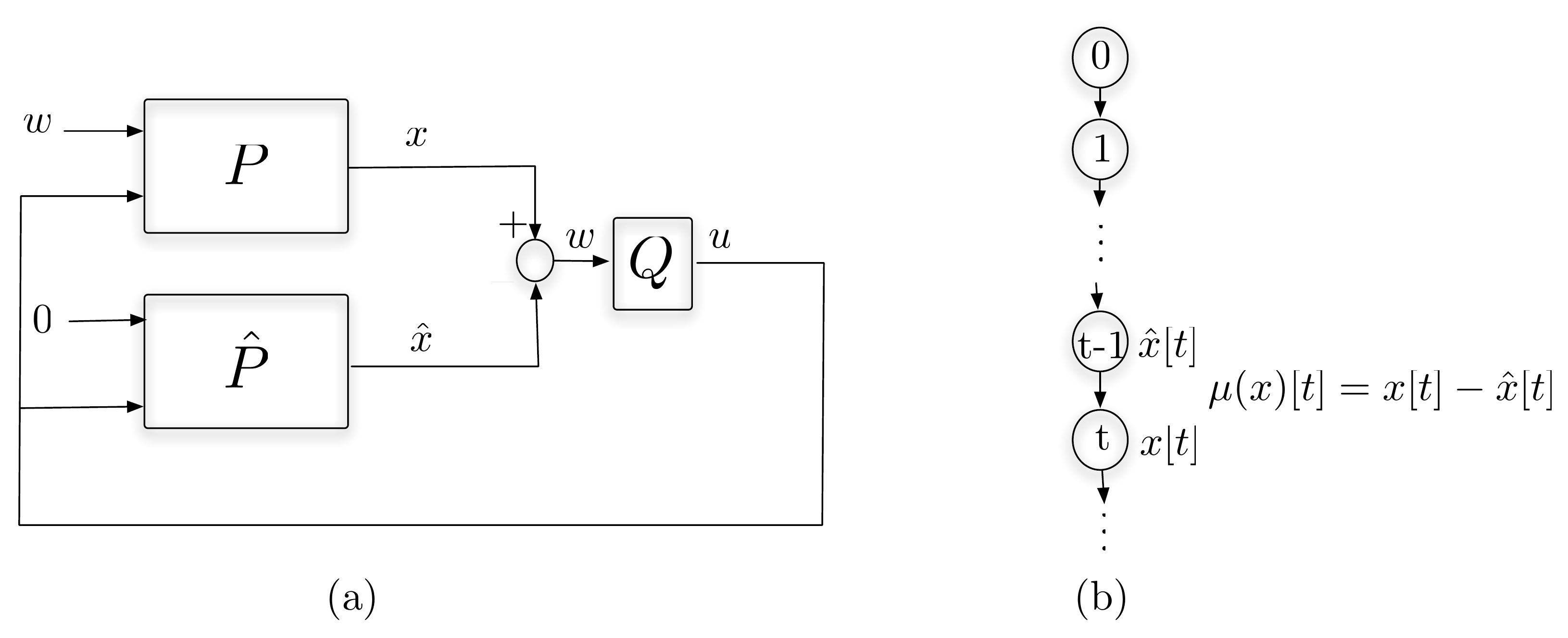}
  \end{center}
\vspace{-15pt}
  \caption{(a) A block diagram interpretation of the Youla parameterization. Here it is assumed that the plant $P$ is stable and state-feedback is available. The simulator $\hat{P}$ predicts the state one time step in the future, using which the disturbance $w$ is reconstructed. The filter $Q$ then implements a disturbance feed-forward policy. (b) The time axis viewed as a poset $\mc{T}$. At time $t-1$ the simulator predicts the state $x$ downstream at time $t$. M\"{o}bius inversion reconstructs the disturbance $w[t-1]$.}
  \label{fig:youla1}
\end{figure}

We point out that the policy $u=Qw$ is a \emph{disturbance feed-forward policy}. It has been observed in various papers in the literature that while state-feedback policies may yield a complicated dependence between the closed-loop map and the controller variable, an equivalent reparametrization using disturbance feed-forward policies yields an affine dependence. For example, this observation was made in the context of robust optimization for linear systems in the framework of ``purified output-feedback'' \cite{purified1} and also in \cite[Chapter 6]{gattami_thesis}.

The key step in reformulating a feedback problem into a disturbance feed-forward problem is the explicit reconstruction of the disturbance $w$ using the output of the plant and the simulator. This computation may be naturally viewed as a M\"{o}bius inversion operation. To the dynamic evolution of the discrete-time system \eqref{eq:disc_time} one can naturally associate the poset $\mc{T}=(\mathbb{N}, \preceq)$, i.e. the time-axis indexed by the natural numbers (equipped with the standard ordering). This poset is simply the linear poset indexed by the integers (see Fig. \ref{fig:youla1}(b)), and the system-theoretic notion of causality is simply the specialization of our notion of poset-causality specialized to this poset.

%

Consider the variable $x[t]$ (i.e. the state of the system \eqref{eq:disc_time}) as a function on this poset. For elements $k$ such that $t \preceq k$, $x[t]$ is available and for elements $k$ such that $k \prec t$, it is unavailable. Indeed at the element $t-1$ a prediction of $x[t]$ may be computed via $\hat{x}[t]=Ax[t-1]+Bu[t-1]$ (this is precisely the role of the simulator $\hat{P}$ described above). Using the M\"{o}bius inversion formula for the linear poset $\mc{T}$ we have
$
\mu(x[t])=x[t]-\hat{x}[t]=w[t-1].
$
Hence the disturbance computation may be viewed simply as a M\"{o}bius inversion on $\mc{T}$.

It is possible to extend this interpretation to poset-causal systems with multiple  subsystems. In this case, the system of interest is of the form \eqref{eq:disc_time}, where $A, B \in \inc$. The poset associated to the dynamic evolution of this system is then the \emph{product poset} $\mc{T} \times \mc{P}$. An example of product of this is shown in Fig. 
\vspace{-15pt}
\ref{fig:youla3}(a).
\begin{figure}[htbp]
  \begin{center}
  \includegraphics[scale=0.5]{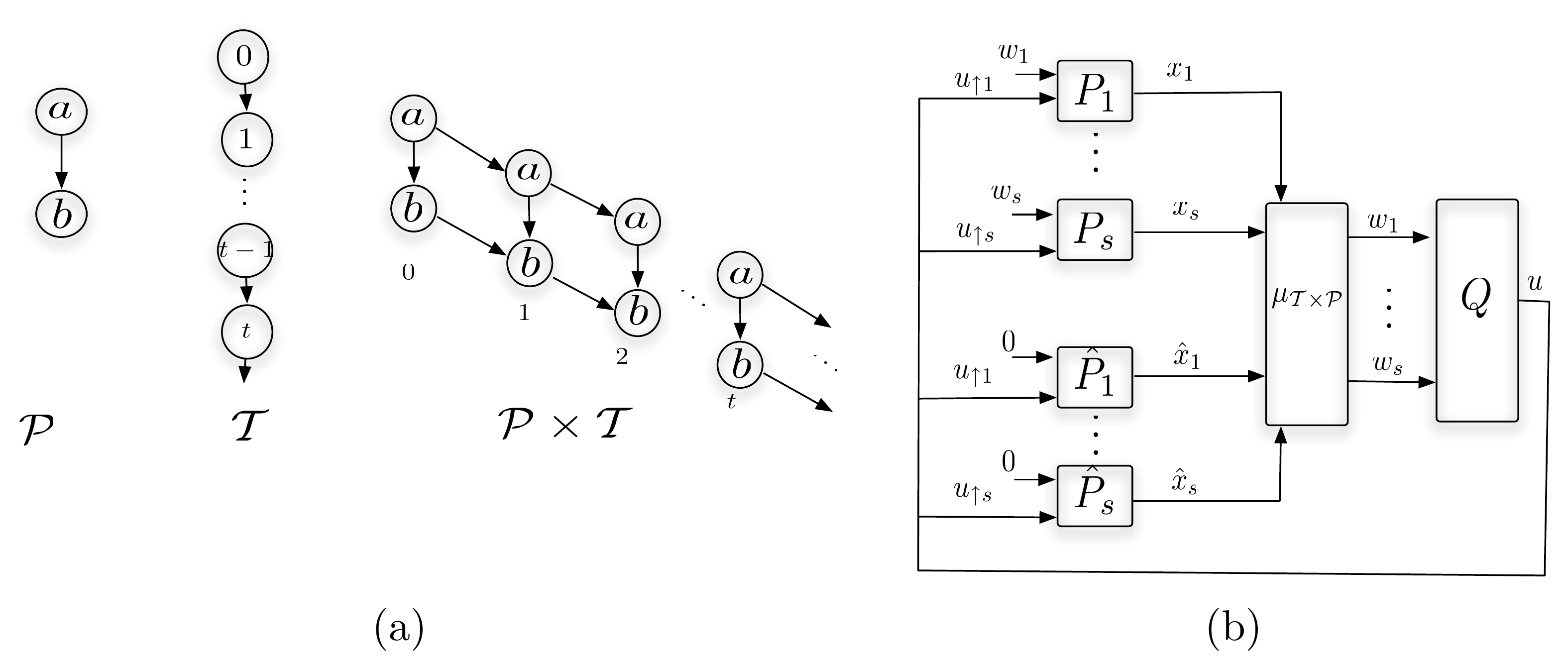}
  \end{center}
\vspace{-15pt}
  \caption{(a) The poset $\mc{P}$ captures causality between the subsystems and the poset $\mc{T}$ captures causality with respect to time. Their product poset is shown on the right. (b) The Youla parameterization implements simulation, followed by M\"{o}bius inversion to compute the disturbances. It then implements a disturbance feedforward policy via a causal (with respect to $\mc{P}\times \mc{T}$) filter $Q$.}
  \label{fig:youla3}
\end{figure}
As explained in earlier sections, subsystems maintain local states which are summarized by the local variable $X$. Using information available at time $t-1$, the prediction of $X[t]$ may be computed as:
\begin{align} \label{eq:disc_local_pred}
\hat{X}[t]=AX[t-1]+BU[t-1].
\end{align}
However, due to the disturbance, the value of $X[t]$ is given by
\begin{align} \label{eq:disc_local}
X[t]=AX[t-1]+BU[t-1]+W[t-1]\zeta^{T},
\end{align}
where $W=\diag(w)$.
The local state variable $X[t]$ may be viewed as a function on the poset $\mc{T} \times \mc{P}$, and hence one may define its M\"{o}bius inverse with respect to this poset. As a consequence of the product structure of the underlying poset, we have the following important lemma.
\begin{lemma}
Suppose $X[t]$ satisfies \eqref{eq:disc_local}. 
Let $\mu_{\mc{T} \times \mc{P}}$ denote the M\"{o}bius operator of the poset $\mc{T} \times \mc{P}$. Then
$$
\mu_{\mc{T} \times \mc{P}}(X[t])=W[t-1].
$$
\end{lemma}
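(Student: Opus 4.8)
The plan is to exploit the product structure of the poset $\mc{T} \times \mc{P}$ by factoring the M\"{o}bius operator $\mu_{\mc{T} \times \mc{P}}$ into a composition of the M\"{o}bius operator on the time-axis $\mc{T}$ and the M\"{o}bius operator $\mu$ on the subsystem poset $\mc{P}$. A standard and well-known fact from order theory (see \cite{rota,aigner}) is that the M\"{o}bius function of a product poset is the product of the M\"{o}bius functions of its factors; concretely $\mu_{\mc{T} \times \mc{P}} = \mu_{\mc{T}} \otimes \mu_{\mc{P}}$ at the level of operators. Since $\mc{T}$ is the linear poset on the integers, its M\"{o}bius inversion is precisely the one-step finite difference, i.e. applying $\mu_{\mc{T}}$ to a function of time extracts the increment between consecutive time steps.

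First I would apply $\mu_{\mc{T}}$ (the time-difference operator) to \eqref{eq:disc_local}. Since the linear poset $\mc{T}$ has the property that its M\"{o}bius inversion computes $X[t] - \hat{X}[t]$ where $\hat{X}[t]$ is the one-step prediction, and since the prediction \eqref{eq:disc_local_pred} is exactly $\hat{X}[t]=AX[t-1]+BU[t-1]$, subtracting gives
\begin{equation*}
\mu_{\mc{T}}(X[t]) = X[t] - \hat{X}[t] = W[t-1]\zeta^T.
\end{equation*}
This isolates the disturbance contribution but leaves the residual $\zeta^T$ factor coming from the local-variable structure across the subsystem poset $\mc{P}$.

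Next I would apply the subsystem M\"{o}bius operator $\mu_{\mc{P}}$ to clear the $\zeta^T$ factor. The key observation is that $W=\diag(w)$ is a diagonal matrix, hence lies in $\inc$, and that $\mu_{\mc{P}}$ acting on $W[t-1]\zeta^T$ should recover $W[t-1]$. Concretely, using Definition \ref{def:mobius_op} we have $\mu(W[t-1]\zeta^T) = \Pi_d(W[t-1]\zeta^T \mu^T)$, and since $\mu = \zeta^{-1}$ the factors $\zeta^T \mu^T = (\mu \zeta)^T = I$ cancel, leaving $\Pi_d(W[t-1]) = W[t-1]$ because $W[t-1]$ is diagonal and therefore already in $\inc$. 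Composing the two operators via the product-poset factorization then yields
\begin{equation*}
\mu_{\mc{T} \times \mc{P}}(X[t]) = \mu_{\mc{P}}\bigl(\mu_{\mc{T}}(X[t])\bigr) = \mu_{\mc{P}}\bigl(W[t-1]\zeta^T\bigr) = W[t-1],
\end{equation*}
as claimed.

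The main obstacle I anticipate is making the factorization $\mu_{\mc{T} \times \mc{P}} = \mu_{\mc{T}} \otimes \mu_{\mc{P}}$ rigorous in the present matrix/local-variable formalism rather than just at the level of the scalar M\"{o}bius function. The paper's operators $\mu$ and $\zeta$ act on local variables through the two-sided projection $\Pi_d(\cdot\, \mu^T)$, so I must verify that applying the time-difference first and the subsystem M\"{o}bius inversion second genuinely coincides with the intrinsic M\"{o}bius operator of the product poset, and that the projections $\Pi_d$ interact correctly with the $\zeta^T$ appearing in \eqref{eq:disc_local}. Care is also needed because $X[t]$ is a full local variable (not necessarily in $\inc$), so I would lean on part 2 of Lemma \ref{lemma:properties}, which guarantees $\mu$ depends only on $\Pi_d(\cdot)$, to ensure the off-diagonal and offstream entries do not corrupt the cancellation. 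Once the operator factorization and the $\zeta^T \mu^T = I$ cancellation are justified, the remainder is routine.
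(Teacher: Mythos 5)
Your proposal is correct and follows essentially the same route as the paper: both invoke the factorization of the M\"{o}bius operator on the product poset $\mc{T}\times\mc{P}$ into its time and subsystem components, identify the time-factor with the one-step prediction residual $X[t]-\hat{X}[t]=W[t-1]\zeta^T$, and then cancel the $\zeta^T$ via $\mu(W[t-1]\zeta^T)=\Pi_d(W[t-1]\zeta^T\mu^T)=W[t-1]$. The only (immaterial) difference is that you apply $\mu_{\mc{T}}$ before $\mu_{\mc{P}}$ whereas the paper composes them in the opposite order; the concerns you flag about making the operator-level factorization rigorous are real but are glossed over by the paper as well.
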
\vspace{-15pt}
\begin{proof}
We will use $\mu(X)$ to denote the M\"{o}bius inversion of the local variable $X$ with respect to the poset $\mc{P}$ as defined in Definition \ref{def:mobius_op}.
It is well-known \cite[Proposition 5]{rota} that the M\"{o}bius operator factorizes for product posets as:
\begin{align*}
\mu_{\mc{T} \times \mc{P}}X[t]&=\mu_{\mc{T}} \mu_{\mc{P}}X[t] 
=\mu(X)[t]-\mu(\hat{X})[t] 
=\mu(W[t-1] \zeta^{T}) 
=W[t-1].
\end{align*}
\end{proof}
Thus the role of the M\"{o}bius operator is to compute the local disturbance $w_i$. Once these disturbances are computed, one may process it using a filter $Q$ that is causal with respect to $\mc{T} \times \mc{P}$ to obtain the input $u$. This is depicted in Fig. \ref{fig:youla3}(b).
\section{Conclusions}
In this paper we considered the problem of designing decentralized
poset-causal controllers for poset-causal systems. We studied the
architectural aspects of controller design, addressing issues such as
the role of the controller states, and how the structure of the poset
should affect the architecture. We proposed a novel architecture in
which the role of the controller states was to locally predict the
unknown ``downstream'' states. Within this architecture the controller
itself performs certain natural local operations on the known and
predicted states. These natural operations are the well-known zeta and
M\"{o}bius operations on posets.

Having proposed an architecture, we proved two of its important
structural properties. The first was a \emph{separation principle}
that enabled a decoupled choice of gains for each of the local
subsystems. The second was establishing the \emph{optimality
  properties} of this architecture with respect to the
$\mc{H}_2$-optimal decentralized control problem. The proposed
M\"obius-based architecture is quite natural, has very appealing
interpretations, and can be easily extended to more complicated. These extensions will be the subject of future
work.

%

\bibliography{architecture}

\appendix
The required optimal solution was derived in \cite{poset_h2_journal}[Theorem 3] and we first develop some notation and concepts that have been used therein.
Let us define $q(i)=\mu(X)^i_{\down \down i}$ (so that the vector $q(i)$ has only those components of $\mu(X)^i_j$ such that $i \prec j$), and $q_j(i)=\mu(X)^i_j$.  Note also that $\mu(X)^j_j=x_j-\sum_{k \prec j}q_j(k)$ from \eqref{eq:mobius2}. Let us define $A^{cl}(j)=A(\down j, \down j) + B(\down j, \down j)F(j)$. The closed-loop dynamics \eqref{eq:sep} at subsystem $j$ reduce to:
\small
\begin{equation}  \label{eq:sep2} \begin{split}
\ll \ba{c} \dot{x}_j-\sum_{k \prec j}\dot{q}_j(k) \\ \dot{q}(j) \ea \rr = 
\ll \ba{cc} A^{cl}_{11}(j) & A^{cl}_{12}(j) \\ A^{cl}_{21}(j) & A^{cl}_{22}(j) \ea \rr \ll \ba{c} x_j-\sum_{k \prec j}q_j(k) \\ q(j) \ea \rr(t).
\end{split} \end{equation}\normalsize \vspace{-10pt}
Note that from \eqref{eq:control_law} (keeping in mind that $\mu(X)^i_j=0$ if $j \npreceq i$, and that $u_j=U^j_j$), the control law assumes the form: \small
\begin{align*}
u_j&=\sum_{k \preceq j}\hat{F}^{(j)}(k)\mu(X)^k =\sum_{k \preceq j} F^{(j)}(k) \ll \ba{c} x_k-\sum_{l \prec k}q_k(l) \\ q(k) \ea \rr(t).
\end{align*} \normalsize
(Recall that $F^{(j)}(k)$ is the $j^{th}$ row of the matrix $F(k)$).
Thus the subsystems need to compute $q(j) = \mu(X)_{\down \down j}$, (the differential improvements in the state predictions at subsystem $j$) to implement the control law. This is an important feature of the control law: \emph{the controller states correspond to the differential improvements $\mu(X)$ rather than the state $X$ itself}.

It may be verified from \eqref{eq:sep2} and \eqref{eq:control_law} that the explicit controller for subsystem $j \in P$ assumes the following form: \small
\begin{equation} \label{eq:controller2}
\begin{split}
\dot{q}(j) &=A^{cl}_{22}(j)q(j)+A^{cl}_{21}(j)\left(x_j-\sum_{k \prec j}q_j(k)\right) \\
u_j(t)&=\sum_{k \preceq j} F^{(j)}(k) \ll \ba{c} x_k-\sum_{l \prec k}q_k(l) \\ q(k) \ea \rr(t).
\end{split}
\end{equation} \normalsize
Furthermore, note that at subsystem $j$, $\mu(X)_{ij}=0$ for $j \npreceq i$. Hence, only the states $\mu(X)_{ij}$ for $i \in \down \down j$ need to be computed. Let  $\mu(X)_{\down \down j}=\ll \mu(X)_{ij} \rr_{i \in \down \down j}$.
\begin{proof}[Proof of Theorem \ref{theorem:struct}]
As mentioned, the optimal controller is given by \cite[Theorem 3]{poset_h2_journal}. We will show that when we pick $F(i)=K(\down i, \down i)$ in \eqref{eq:controller}, we recover this controller. In \cite{poset_h2_journal}, the matrices $\mathbf{A}$, $A_{\phi}$, $B_{\phi}$, $C_{\phi}$, $\Pi_1$, and $\Pi_2$ were defined. It is straightforward to verify that:  
\begin{align*} 
\text{diag}(A^{cl}(j))&=\mathbf{A}  &\qquad  
\text{diag}(A_{22}^{cl}(j))&=A_{\Phi} \\
\text{diag}(A_{21}^{cl}(j))&=B_{\Phi} & \qquad   
\left[\sum_{k \prec j} q_j(k) \right]_{j\in P}&=C_\Phi q.  \label{eq:q_identity}
\end{align*}
Letting $q$ be the vectorization of $q(j)$ for $j \in P$ via $q=[q(j)]_{j \in P}$, we may rewrite the dynamics in \eqref{eq:controller2} as
\begin{equation} \label{eq:dynamics_rewritten}
\dot{q}=A_{\Phi}q(t)+B_{\Phi}(x(t)-C_\Phi q(t)).
\end{equation}
Further, note that the vectorization of the control law equation in \eqref{eq:controller2} yields \small
$$
u= \left[ \sum_{k \preceq j} K^{(j)}(\up k, \up k) \ll \ba{c} x_k-\sum_{l \prec k}q_k(l) \\ q(k) \ea \right] \right]_{j \in P}=C_Q\Pi_2 q+C_Q \Pi_1(x-C_{\Phi}q).
$$ \normalsize
Combining the above with \eqref{eq:dynamics_rewritten}, we obtain that
$
u=\ll \ba{c|c} 
A_{\Phi}-B_{\Phi}C_{\Phi} & B_{\Phi} \\ \hline
C_Q(\Pi_2-\Pi_1 C_\Phi ) & C_Q \Pi_1
\ea \rr q,
$
which is precisely the same expression as the controller described in \cite{poset_h2_journal}[Theorem 3]. Since this corresponds to the optimal $\mc{H}_2$ controller, we have the required result.
\end{proof}

\end{document}